\documentclass[12pt]{amsart}
\usepackage{amssymb,verbatim}
\usepackage{graphicx}
\usepackage{pstricks} 
\psset{unit=1cm}
\psset{arrowsize=3pt 2}

\newtheorem{theorem}{Theorem}[section]

\newtheorem{lemma}[theorem]{Lemma}

\theoremstyle{definition}

\newcommand{\0}{\emptyset}

\newcommand{\ol}{\overline}

\newcommand{\tS}{\tilde S}

\newcommand{\e}{\varepsilon}

\newcommand{\al}{\alpha}

\newcommand{\be}{\beta}
\newcommand{\ga}{\gamma}

\newcommand{\si}{\sigma}
\newcommand{\ta}{\theta}
\newcommand{\om}{\omega}
\newcommand{\Da}{\Delta}

\newcommand{\nin}{\not\in}
\newcommand{\imp}{\mbox{Imp}}

\newcommand{\dist}{\mbox{dist}}
\newcommand{\diam}{\mbox{diam}}

\newcommand{\iy}{\infty}
\newcommand{\C}{\mbox{$\mathbb{C}$}}

\newcommand{\D}{\mbox{$\mathbb{D}$}}

\begin{document}

\date{June 11, 2007; the revised version October 14, 2008}
\title[The solar Julia sets]
{The solar Julia sets of basic quadratic Cremer polynomials}

\author{A.~Blokh}

\thanks{The first author was partially
supported by NSF grant DMS-0456748}

\author{X.~Buff}

\author{A.~Ch\'eritat}

\author{L.~Oversteegen}

\thanks{The fourth author was partially  supported
by NSF grant DMS-0405774}

\address[A.~Blokh and L.~Oversteegen]
{Department of Mathematics\\ University of Alabama at Birmingham\\
Birmingham, AL 35294-1170, USA}

\address[X.~Buff and A.~Ch\'eritat]
{Institut de Math\'ematiques, Universit\'e Paul Sabatier, 118 route de Narbonne,
31062 Toulouse, France}

\email[A.~Blokh]{ablokh@math.uab.edu}
\email[L.~Oversteegen]{overstee@math.uab.edu}
\email[A.~Ch\'eritat]{cheritat@picard.ups-tlse.fr}
\email[X.~Buff]{buff@picard.ups-tlse.fr}

\subjclass[2000]{Primary 37F10; Secondary 37B45, 37C25}

\keywords{Complex dynamics; Julia set; Cremer fixed point}

\begin{abstract} In general, little is known about the exact
topological structure of Julia sets containing a Cremer point. In this paper we
show that there exist quadratic Cremer Julia sets of positive area such that
for a full Lebesgue measure set of angles the impressions are degenerate, the
Julia set is connected im kleinen at the landing points of these rays, and
these points are contained in no other impression.
\end{abstract}

\maketitle

\section{Introduction}\label{intro}

A long outstanding problem of whether polynomial Julia sets of positive area
exist has recently been solved in \cite{bc06}. In that paper the authors
implemented a program initiated by A. Douady and showed that there exist
quadratic polynomials of various types all of which have Julia sets of positive
area. The proofs are based on McMullen's results \cite{mcm98} regarding the
measurable density of the filled-in Julia set near the boundary of a Siegel
disk with bounded type rotation number, Ch\'eritat's techniques of parabolic
explosion \cite{che01} and Yoccoz's renormalization techniques \cite{yoc95} to
control the shape of Siegel disks, and Inou and Shishikura's results \cite{is} to
control the post-critical sets of perturbations of polynomials having an
indifferent fixed point.

In particular, there exist quadratic polynomials with a fixed Cremer point
whose Julia set has positive area. The topological structure of Julia sets
containing a Cremer point (including the above mentioned ones of positive area)
has remained elusive. In the present paper we want to shed some light upon this
problem.

Let us first discuss the topological structure of connected polynomial Julia
sets in general. The best case scenario is when the Julia set $J$ is locally
connected. Then $J$ is homeomorphic to the quotient space $S^1/\sim=J_\sim$ of the unit circle
with respect to a specific equivalence relation $\sim$ called
an \emph{invariant lamination}. Moreover, the factor map in this case
conjugates the polynomial to the map $f_\sim$ induced on $J_\sim$ by the map
$z^d:S^1\to S^1$ with $d$ being the degree of the polynomial. Spaces like
$J_\sim$ are called below \emph{topological Julia sets} while the induced maps
$f_\sim$ on $J_\sim$ are called \emph{topological polynomials}. Thus, in the
locally connected case topological polynomials are good (one-to-one) models for
true complex polynomials on their Julia sets.

The picture is more complicated when $J$ is not locally connected. Still even
in that case recently developed tools allow one to use topological polynomials
as models (albeit not as good as above). Given a polynomial $P$, denote by
$J_P$ its Julia set. Call irrational neutral periodic points \emph{CS-points};
a CS-point $p$ is said to be a \emph{Cremer point} if the iterate of the map
which fixes $p$ is not linearizable in a small neighborhood of $p$. Suppose
that $P$ is a polynomial with connected Julia set and no CS-points. In his
fundamental paper \cite{kiwi97} Jan Kiwi introduced for $P$ an invariant
lamination $\sim_P$ on $S^1$ such that $P|_{J_{P}}$ is semi-conjugate to the
induced map $f_{\sim_P}:J_{\sim_P}\to J_{\sim_P}$ by a monotone map $m:J_P\to
J_{\sim_{P}}$ (by \emph{monotone} we mean a continuous map whose point
preimages are connected). Then $J_{\sim_P}$ is a locally connected model for
$J$, and $P|_J$ is monotonically semiconjugate to $f_{\sim_P}$. In addition
Kiwi proved in \cite{kiwi97} that at all periodic points $p$ of $P$ in $J_P$
the set $J_P$ is locally connected at $p$ and $m^{-1}\circ m(p)=\{p\}$.
However in some cases the entire approach which uses modeling of the Julia set
by means of a monotone map onto a locally connected continuum breaks down. From
now on we will consider only quadratic polynomials; the critical point will be
always denoted by $c$.

A quadratic polynomial $P$ with a \emph{Cremer \textbf{fixed} point} (i.e. with
a neutral non-linearizable fixed point $p\in J$ such that $P'(p)=e^{2\pi i
\al}$ with $\al$ irrational) is said to be a \emph{basic Cremer polynomial},
and its Julia set is called a \emph{basic Cremer Julia set} (in this case we
always denote the Cremer fixed point of $P$ by $p$). The main result of a
recent preprint \cite{bo06b} is that if $P$ is a basic Cremer polynomial then
any monotone map $m:J_P\to A$ with $m(J_P)$ locally connected must collapse all
of $J_P$ to a point. Hence studying the topology of basic Cremer Julia sets
requires new tools (see, e.g., the fundamental papers \cite{pere94} and
\cite{pere97} by Perez-Marco); some tools here are provided by continuum theory
and developed in \cite{grismayeover99} (in which results of \cite{yoc95} were
used) and further in \cite{bo06a}. Before introducing them let us have an
overview of a few known results.

By Sullivan \cite{sul83}, a basic Cremer Julia set $J$ is not locally
connected. Moreover, by Kiwi \cite{k00} the critical point $c$ is not
accessible. Still, there are points in $J$ at which rays are landing (e.g.,
repelling periodic points \cite{douahubb85}), so it makes sense to study in
more detail the pattern in which such landing can occur. In this respect the
following important question is due to C. McMullen \cite{mcm94}: can a basic
Cremer Julia set contain any points at which at least two rays are landing
(so-called \emph{biaccessible points})? This question was partially answered by
Schleicher and Zakeri in \cite[Theorem 3]{sz99} (see also \cite[Theorem
3]{zak00}) where they show that \emph{if} a basic Cremer Julia set contains a
biaccessible point then this point eventually maps to the Cremer point. In
\cite{bo06a} it is shown that if a basic Cremer Julia set has a biaccessible
point then it is a \emph{solar} Julia set as described in
Theorem~\ref{mainold}. However it is still unknown if there exist basic Cremer
Julia sets with biaccessible points. Another paper studying the topology of
basic Cremer Julia sets is that of S\o rensen \cite{sor98}. In that paper the
author constructs basic Cremer polynomials with external rays which accumulate
on both the Cremer point and its preimage and thus gives examples of basic
Cremer polynomials whose Julia sets have very interesting topological
properties.

Now we would like to state the results of \cite{bo06a}. If $P$ is a basic
Cremer polynomial then by \cite{mane93} $p\in \om(c)$ (see also \cite{pere97}
and \cite[Theorem 1.3]{chi05}). Following Kiwi~\cite{kiwi97} we say that two
angles $\ta, \ga$ are \emph{K-equivalent} if there are angles $x_0=\ta, \dots,
x_n=\ga$ such that the impressions of $x_{i-1}$ and $x_i$ are non-disjoint for
$1\le i\le n$; a class of K-equivalence is called a \emph{K-class}, and an
angle whose impression is disjoint from all other impressions is said to be
\emph{K-separate}. A continuum $X$ is \emph{connected im kleinen at a point
$x$} provided for each open set $U$ containing $x$ there exists a connected set
$C\subset U$ such that $x$ is in the interior of $C$ (relative to $X$). A continuum $X$ is
\emph{locally connected at a point $x$} provided there exists a basis of open
connected neighborhoods at $x$. Observe that sometimes different terminology is
used. For example, in Milnor's book \cite[p. 168]{miln00} the property of local
connectivity is called ``open local connectivity'' while to the property of
being connected im kleinen at a point Milnor refers to as the property of being
``locally connected at a point''. On the other hand, in the textbook by Munkres
\cite[p. 162]{mun00} connected im kleinen is called ``weakly locally
connected''. Using our terminology, if a space is locally connected at $x$,
then it is connected im kleinen at $x$. It is well known that if a continuum is
connected im kleinen at each point, then it is locally connected (see, e.g.,
\cite[p. 162, Ex. 6]{mun00}). However, a continuum can be connected im kleinen
at a point without being locally connected at this point (as an example one can
consider the so-called \emph{infinite broom}, see \cite[p. 162, Ex. 7]{mun00}).

The main result of \cite{bo06a} is the following theorem (by a
\emph{degenerate} impression we mean an impression consisting of one point).

\begin{theorem}\label{mainold} Let $P$ be a basic Cremer polynomial. Then
its Julia set $J$ must be one of the following two types.

\begin{description}

\item[Solar Julia set] $J$ has the following equivalent properties:

\begin{enumerate}

\item there is an impression not containing the Cremer point;

\item there is a degenerate impression;

\item the set $Y$ of all K-separate angles with degenerate
impressions contains all angles with dense orbits ($Y$ contains a full Lebesgue
measure $G_\delta$-set dense in $S^1$) and a dense in $S^1$ set of periodic
angles, and the Julia set $J$ is connected im kleinen at the landing points of
these rays;

\item there is a point at which the Julia set is connected im
kleinen;

\item not all angles are K-equivalent.

\end{enumerate}

\item[Red dwarf Julia set] $J$ has the following equivalent properties:

\begin{enumerate}

\item All impressions are non-degenerate.

\item The intersection of all impressions is a non-degenerate
subcontinuum of $J$ containing the Cremer point and the limit set of the
critical point.

\item $J$ is nowhere connected im kleinen.

\end{enumerate}

\end{description}

\end{theorem}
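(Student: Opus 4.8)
The plan is to reduce the full dichotomy to two internal equivalence chains, noting first that Solar property $(2)$ is the literal negation of Red dwarf property $(1)$, and Solar property $(4)$ is the literal negation of Red dwarf property $(3)$; hence once each list of properties is shown to be internally equivalent, the statement ``$J$ is of exactly one of the two types'' follows automatically. Throughout I would work with the external rays $R_\alpha$ and their impressions $\imp(\alpha)$, using the standard facts that each $\imp(\alpha)$ is a subcontinuum of $J$, that $\bigcup_\alpha\imp(\alpha)=J$, and that $P(\imp(\alpha))=\imp(2\alpha)$, so doubling permutes impressions. I would isolate two structural inputs at the outset: \emph{(i)} every non-degenerate impression of a basic Cremer Julia set contains the Cremer point $p$ (so $p$ is the sole obstruction to ``nice'' behavior; this is where Perez-Marco's hedgehog theory and the non-accessibility of $c$ enter), and \emph{(ii)} a dictionary, furnished by the continuum-theory results of \cite{grismayeover99} and \cite{bo06a}, that translates ``$\imp(\alpha)$ is degenerate and $R_\alpha$ lands'' into ``$J$ is connected im kleinen at the landing point'', and conversely turns a cik point of $J$ back into external-ray data.

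For the Red dwarf chain, assume $(1)$: every impression is non-degenerate. By \emph{(i)} every impression contains $p$, so $I=\bigcap_\alpha\imp(\alpha)\ni p$; I would show $I$ is a continuum (from upper semicontinuity of $\alpha\mapsto\imp(\alpha)$ together with the common point $p$) and that it is forward invariant, whence $p\in\om(c)$ (Ma\~n\'e \cite{mane93}) and invariance force $I\supseteq\om(c)$ and in particular make $I$ non-degenerate; this is $(2)$. The implication $(2)\Rightarrow(1)$ is immediate, since $I$ is a non-degenerate subset of every impression. For $(1)\Rightarrow(3)$: near any $x\in J$ choose $\alpha$ with $x\in\imp(\alpha)$, so $\imp(\alpha)\supseteq I\ni p$, and the same holds for all angles near $\alpha$; by the cyclic order of impressions around $S^1$ no small connected subset of $J$ can be a neighborhood of $x$, so $J$ is nowhere cik. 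Finally $(3)\Rightarrow(1)$ follows by contraposition from the implication $(2)\Rightarrow(4)$ of the Solar chain below (recall $(3)$ negates Solar's $(4)$ and $(1)$ negates Solar's $(2)$).

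For the Solar chain, start from $(2)$: $\imp(\al_0)=\{z\}$ for some angle $\al_0$. By \emph{(i)} a degenerate impression containing $p$ would equal $\{p\}$, impossible since the Cremer fixed point is the landing point of no ray; hence $z\neq p$ and $R_{\al_0}$ lands at $z$. That $z$ lies in no other impression --- so that $\al_0$ is K-separate and $(5)$ holds --- I would derive from Schleicher--Zakeri \cite{sz99} (a biaccessible point pushes forward to a biaccessible iterate of $p$, which is excluded) together with \emph{(i)}, and this also gives $(1)$. For the measure/category strength in $(3)$ I would note that $\{\alpha:\imp(\alpha)\text{ degenerate}\}$ is a $G_\delta$ set, backward invariant under doubling and nonempty by the above; transitivity and ergodicity of $z\mapsto z^2$ on $S^1$ then upgrade this to: it contains every angle with a dense orbit (a full Lebesgue measure dense $G_\delta$) and a dense set of periodic angles (using density of repelling periodic points and the Douady--Hubbard fact that their rays land). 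That $J$ is connected im kleinen at the corresponding landing points is exactly input \emph{(ii)}, giving the full strength of $(3)$, and $(3)\Rightarrow(4)$ is trivial. The loop closes with $(4)\Rightarrow(1)$: given a cik point $x$, input \emph{(ii)} produces a ray landing at $x$ whose prime-end crosscuts shrink, so its impression omits $p$; and $(1)\Rightarrow(2)$ is \emph{(i)} once more.

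The main obstacle, I expect, is input \emph{(ii)}: the precise continuum-theoretic dictionary between the intrinsic topology of the highly non-locally-connected set $J$ and the behavior of external rays and their impressions. Deducing ``$J$ is connected im kleinen at $z$'' from ``$R_{\al_0}$ lands at $z$ and $\imp(\al_0)=\{z\}$'' requires controlling how $J$ sits near $z$ from the inside, not merely the conformal geometry of the exterior, and the converse direction $(4)\Rightarrow(1)$ is delicate for the same reason; here the machinery of \cite{grismayeover99} and \cite{bo06a}, the non-accessibility of $c$ (Kiwi \cite{k00}), and Perez-Marco's hedgehogs do the real work. A secondary difficulty is showing that $I$ in the Red dwarf case is genuinely non-degenerate rather than just $\{p\}$: this must be extracted from $p\in\om(c)$ and the invariance/order structure, and it is precisely the line dividing the two regimes.
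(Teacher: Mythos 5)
This theorem is not proved in the paper you were given: it is stated as the main result of the companion paper \cite{bo06a} (the sentence ``The main result of \cite{bo06a} is the following theorem'' precedes the statement), and the present paper uses it as a black box in the proof of Theorem~\ref{main}. So there is no in-text proof to compare your attempt against, and any honest proof must reproduce the continuum-theory work of \cite{bo06a} and \cite{grismayeover99}.

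On the merits of your sketch: the reduction scheme is correct. Solar~(2) is indeed the negation of Red dwarf~(1) and Solar~(4) the negation of Red dwarf~(3), so proving each list internally equivalent yields the dichotomy, and you correctly flag that the real work is carried by your ``inputs'' (i) and (ii). The problem is that input~(i) --- \emph{every} non-degenerate impression of a basic Cremer Julia set contains $p$ --- is not a quotable black-box fact; as stated it is essentially one half of the theorem. If one could cite~(i), then Solar~(1)$\Leftrightarrow$(2) would be a one-line remark and the Red dwarf intersection property would be nearly automatic, which should be a warning sign. What \cite{pere94}, \cite{pere97} and Ma\~n\'e actually give is weaker: a forward-invariant hedgehog continuum attached to $p$ with $p\in\om(c)$, and one must argue \emph{dynamically} (tracking impressions under angle-doubling, exploiting that $P(\imp(\al))=\imp(2\al)$, and using the rotational structure around $p$) to show that if a single impression misses $p$ then a degenerate impression must exist. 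Similarly, your step ``invariance forces $I\supseteq\om(c)$'' in the Red dwarf chain does not follow merely from $p\in I$ and forward-invariance of $I$; you would need $c\in I$, or an argument that $I$ absorbs the critical orbit, which is exactly what has to be established and is not a consequence of the hypotheses you invoke. Input~(ii) --- the translation between ``degenerate impression plus landing ray'' and ``connected im kleinen'', in both directions --- is also not off-the-shelf and is where \cite{grismayeover99} does its hard work; you identify this correctly, but the sketch leaves it entirely open. In short, the decomposition is the right one and the obstacles are correctly named, but the two pillars on which everything rests are assumed rather than supplied, and pillar~(i) is overstated in a way that, if taken literally, trivializes the very assertion under proof.
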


The main aim of this paper is to prove the following theorem.

\begin{theorem}\label{main} There exist basic Cremer polynomials
with solar Julia sets of positive area.
\end{theorem}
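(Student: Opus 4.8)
The strategy is to combine the two main ingredients already in the literature: the existence of basic Cremer polynomials whose Julia sets have positive area (from \cite{bc06}), and the dichotomy of Theorem~\ref{mainold}, which says that a basic Cremer Julia set is either \emph{solar} or a \emph{red dwarf}. Thus it suffices to produce \emph{one} basic Cremer polynomial $P$ which simultaneously (a) has a Julia set of positive area and (b) is not of red dwarf type. Indeed, once (a) and (b) hold, Theorem~\ref{mainold} forces $J_P$ to be solar, and we are done. So the whole problem reduces to ruling out the red dwarf alternative for a carefully chosen positive-area example.

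To rule out the red dwarf case I would exploit the characterization in Theorem~\ref{mainold}: for a red dwarf, \emph{every} impression is non-degenerate and the intersection $\bigcap_\ta \imp(\ta)$ is a non-degenerate subcontinuum containing $p$ and $\om(c)$; equivalently, all angles are K-equivalent and $J_P$ is nowhere connected im kleinen. The cleanest route is to arrange that $P$ has a \emph{repelling periodic point} whose ray-landing behaviour is incompatible with being swallowed by a common non-degenerate impression --- or, more robustly, to control the construction in \cite{bc06} well enough that one produces an external ray landing at a point $z$ with a non-degenerate ``gap'' of $J_P$ nearby, so that $J_P$ is connected im kleinen at $z$. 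Since connectedness im kleinen at a single point is exactly property (4) in the solar list, this immediately puts $P$ in the solar case. The key technical work is therefore to revisit the perturbation scheme of \cite{bc06} --- built on McMullen's density estimates \cite{mcm98}, Ch\'eritat's parabolic explosion \cite{che01}, Yoccoz's and Inou--Shishikura's renormalization control \cite{yoc95,is} --- and verify that the positive-area Cremer examples it produces can be taken within a region of parameter space where the combinatorics of the Julia set is understood well enough to exhibit such a landing point (for instance near a parameter whose Julia set is already known to be locally connected at many of its repelling periodic points, with the positive-area Cremer point obtained as a limit).

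**Main obstacle.** The hard part is precisely the interface between measure and topology: the positive-area construction of \cite{bc06} is an analytic/measure-theoretic limiting argument that gives essentially no a priori information about the \emph{topological} structure of the resulting Cremer Julia set, while the solar/red-dwarf dichotomy is purely topological. Bridging this gap --- i.e., showing that \emph{at least one} impression of the limiting Cremer polynomial stays degenerate (equivalently, that not all angles become K-equivalent in the limit) --- requires tracking the impressions, or the landing of some explicit ray, uniformly through the perturbation sequence. Concretely one would want to fix an angle $\ta$ (say a periodic angle, or an angle with dense orbit) and show that, for the Cremer parameters produced by \cite{bc06}, $\imp(\ta)$ does not balloon up to meet every other impression; controlling impressions under these near-parabolic perturbations, using the external-ray and renormalization machinery, is where the real difficulty lies. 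A secondary point to check is that the Cremer point one lands on is genuinely non-linearizable (a \emph{Cremer} rather than Siegel parameter) with the rotation number in the admissible arithmetic class, but this is already handled by the methods of \cite{bc06} together with \cite{che01,yoc95}.
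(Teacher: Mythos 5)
Your high-level framing is exactly right, and matches the paper: combine Theorem~\ref{bc} (positive-area Cremer examples) with the solar/red-dwarf dichotomy of Theorem~\ref{mainold}, then rule out the red-dwarf alternative by exhibiting one impression that does not contain the Cremer point (or, equivalently, a point of connectedness im kleinen). You also correctly locate the obstacle: the positive-area construction is analytic, the dichotomy is topological, and one needs concrete topological information about the resulting Cremer Julia set. However, your proposal stops precisely at this obstacle. Saying that one should ``revisit the perturbation scheme of \cite{bc06}'' and ``control the combinatorics well enough'' names the problem without solving it, and you explicitly concede that this ``is where the real difficulty lies.'' A proof has to resolve that difficulty, and you do not.

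The paper's actual resolution is missing from your proposal, and it is worth describing because it sidesteps the route you suggest. Rather than tracking impressions uniformly through the near-parabolic perturbation (which you correctly suspect is hard), the paper first sets up a purely combinatorial picture in the nearby \emph{bounded-type Siegel} polynomial $P_\ta$: the Julia set is locally connected, and to each periodic point one associates a string of $\Da'$-pullbacks converging to it. Two periodic points $u,v$ are chosen whose strings $S_u,S_v$ share a common initial segment of length $\ge 2$ (so disjoint from $\oDa$) and then diverge. The crucial new tool is Lemma~\ref{inter}: Theorem~\ref{bc} gives $\partial[M_\al,\Da_\ta]<\e$, and a continuity argument together with connectedness of $P_\al^{-n}(M_\al)$ shows that, for a \emph{finite} depth $n$, the $M'_\al$-pullbacks of the Cremer polynomial $P_\al$ have exactly the same intersection pattern (as an abstract tree $A_n$) as the $\Da'_\ta$-pullbacks of $P_\ta$. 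That finite combinatorial agreement is then bootstrapped \emph{inside the Cremer Julia set itself}: Lemma~\ref{pullba} produces, for each of $u$ and $v$, an infinite chain of concatenated pullbacks (a ``string'') whose consecutive terms are the only ones allowed to meet (using that subcontinua of a non-separating tree-like continuum have connected intersections); the Shrinking Lemma forces these chains to form null sequences converging to periodic points $u',v'$ of $P_\al$. Finally, $\ol{L_\al\cup Q^\al_u\cup Q^\al_v}$ together with two external rays landing at $u'$ and $v'$ separates the plane into two half-planes, one containing $M_\al\ni p_\al$; any angle whose ray lies in the other half-plane has impression disjoint from $p_\al$, which is item (1) in the solar list. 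Note also that one of your suggested heuristics --- finding a repelling periodic point with incompatible ray-landing --- would not by itself work: external rays land at repelling periodic points in red-dwarf Julia sets too (Douady--Hubbard), so landing alone gives no obstruction; what is needed and what the paper supplies is a concrete topological separation of $p_\al$ from some impression.

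In short: your plan-of-attack paragraph is the right strategy, but the entire mathematical content of the proof --- the transfer-of-intersection-pattern lemma, the pullback chains, the shrinking argument, and the fork-plus-rays separation --- is absent, so the proposal has a genuine gap at exactly the step you yourself flag as the hard part.
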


We prove Theorem~\ref{main} combining results from \cite{bc06} and
\cite{bo06a}. An interesting remaining problem then is that of the existence of
red dwarf Julia sets. It is related to a well-known problem concerning the
existence of \emph{indecomposable Julia sets} (a continuum is
\emph{indecomposable} if it cannot be represented as the union of two proper
subcontinua). It is known that if the Julia set is indecomposable then every
impression coincides with the entire Julia set (see, e.g., \cite{mr} and
\cite{cmr}).

\section{Preliminaries}

\subsection{General facts and notation}

In what follows we use standard tools of Carath\'eodory theory. An
\emph{unshielded} continuum $X\subset \C$ is a continuum which coincides with
the boundary of the infinite complementary component $U$ of $X$ in the complex
sphere. Given an unshielded continuum $X$ let $\phi$ be the normalized Riemann
map from the unit disk $\D$ onto $U$. The \emph{external ray $R_\al$} is the
image of the radius of $\D$ corresponding to the \emph{external} angle $\al$.
The \emph{impression $\imp(\al)$} of $R_\al$ is defined (see, e.g.,
\cite{pom92}) as the set of all limit points of sequences $\phi(x_i)$ taken
over all sequences $x_i\to e^{2\pi i\al}, x_i\in \D$ (if we do not want to
specify the angle we will omit it from the notation). By a \emph{component} of
a set we always mean a \emph{connected component} of this set, and by a
\emph{non-separating planar set} we mean a set whose complement in the plane is
connected. Also, given sets $A, B$ such that $A\cap B$ is a singleton, we shall
say that $A$ is \emph{attached} to $B$.

Given a polynomial $f:\C\to \C$ with non-separating Julia set $J(P)$ and a
continuum $K\subset J(P)$, a set $K''$ is said to be a \emph{$K$-pullback (of
order $n$ by $f$)} if $K''$ is a component of $f^{-n}(K)$. Clearly, any
$K$-pullback $K''$ maps onto $K$ as a branched covering map. For us the most
interesting is the case of quadratic polynomials with non-separating Julia sets
and subcontinua $K$ of the Julia set. Then if $K$ contains $f(c)$, the
first $K$-pullback is unique and maps onto $K$ in a 2-to-1 branched covering
fashion while if $K$ does not contain $f(c)$ then there are two first
$K$-pullbacks each of which maps onto $K$ homeomorphically. Given a quadratic
polynomial $f$ with the critical point $c_f=c$, we denote by $\psi_f=\psi$ the
involution which maps any $z$ to the other preimage of $f(z)$ (e.g., $\psi(c)=c$,
and if $f(z)=z^2+v$ then $\psi(z)=-z$). Below we often consider forward
invariant continua $K\subset J_f$ which contain $c$ but are such that the first
$n$-segment of the orbit of $c$ avoids $\psi(K)=K'$ (i.e., $f(c)\nin K', \dots,
f^n(c)\nin K'$). In this case we consider $K'$-pullbacks of order at most $n$
($K'$ is considered its own pullback of order $0$). Because of the assumptions,
all these $K'$-pullbacks map univalently onto $K'$.

\subsection{Siegel polynomials}\label{Siegel}

A quadratic polynomial is said to be a \emph{basic Siegel polynomial} if it has
an invariant Siegel disk. Denote by $\mathcal{S}_{lc}$ the family of basic
Siegel polynomials with locally connected Julia set. We need a few well-known
facts concerning their Julia sets $J$ (see, e.g., \cite{grismayeover99}). Given
a polynomial $P\in \mathcal{S}_{lc}$, let $\Da$ be its closed Siegel disk,
$S=\partial \Da$, and $\psi(\Da)=\Da'$. Since $J$ is locally connected,
$\{c\}=\Da'\cap \Da$, and so $\Da'$ is \emph{attached} to $\Da$ at $c$ and
contains no forward images of $c$. Consider the branch $c_{-1}, c_{-2}, \dots$
of the backward orbit of $c=c_0$ consisting only of points of $S$ (here
$P(c_{-n-1})=c_{-n}$). At each point $c_{-n}$ the appropriate $n$-th pullback
of $\Da'$, corresponding to $c_{-n}$ as the pullback of $c$, is attached to
$\Da$. However all other $\Da'$-pullbacks are disjoint from $\Da$. Since
\emph{all} forward images of $c$ avoid $\Da'$ then the picture described in the
previous subsection applies to $\Da$ and we have a family of well-defined
univalent $\Da'$-pullbacks of all orders. The entire set $P^{-n}(\Da)$ is a
connected union of $\Da$ and $2^n-1$ $\Da'$-pullbacks, $n$ of which are
attached to $\Da$, while others are disjoint from $\Da$.

\begin{figure}
\begin{picture}(307,231)
\put(0,0){\scalebox{0.5}{\includegraphics{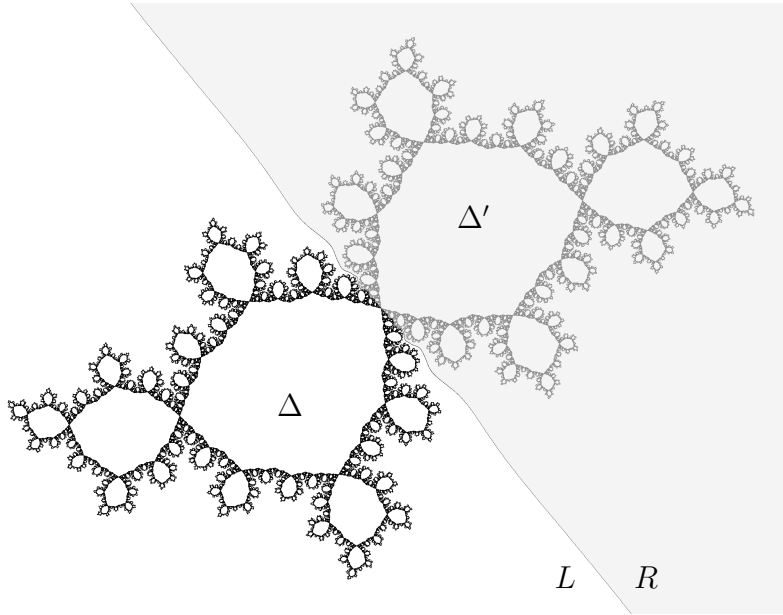}}}
\put(115,75){$\Delta$}
\put(183,146){$\Delta'$}
\put(220,10){$L$}
\put(250,10){$R$}
\end{picture}
\caption{Example of a locally connected basic Siegel polynomial Julia set: that with the golden mean rotation number. We put the two external rays landing at the critical point and grayed the half-plane $R$.}
\end{figure}

The way $\Da$ and $\Da'$-pullbacks intersect (but not their relative location
on the plane!) is the same for all polynomials $P\in \mathcal{S}_{lc}$. To
describe it, observe that two external rays landing at $c$ cut $\C$ into two
half-planes. Assign $1$ to the closed half-plane $L\supset \Da$ and $0$ to the
closed half-plane $R\supset \Da'$ and study the symbolic dynamics of points of $J$
in terms of this partition of the plane. To each point $x\in J$ we associate
its infinite itinerary $i(x)$ defined in the obvious way. Clearly, the only
ambiguity in $i(x)$ arises if the point $x$ is a critical preimage because the
only point of $J$ which belongs to both $L$ and $R$ is $c$ (and hence $c$ can
be assigned both $0$ and $1$ as the first entry in its itinerary). This
ambiguity is resolved though if instead of points we deal with
$\Da'$-pullbacks. Namely, if $Q$ is $\Da$ or a $\Da'$-pullback then we assign
as its infinite itinerary $i(Q)$ the itinerary of any non-precritical point of
$Q$ (any such itinerary $i(Q)$ from some time on consists of $1$'s).  To
simplify the notation, let us denote by $1_i$ the string of 1's of length $i$
(possibly, $i=\iy$). Similarly, if a finite string $i'=i_0i_1\dots i_l$ is
given then $i'_k=\{i_0\dots i_l\}_k$ is the concatenation of $k$ copies of $i'$
(here again $k$ can be $\iy$). Then we have, e.g., that $i(\Da)=1_\iy$,
$i(\Da')=01_\iy$, etc.

\begin{figure}
\begin{picture}(306,256)
\put(0,0){\scalebox{0.5}{\includegraphics{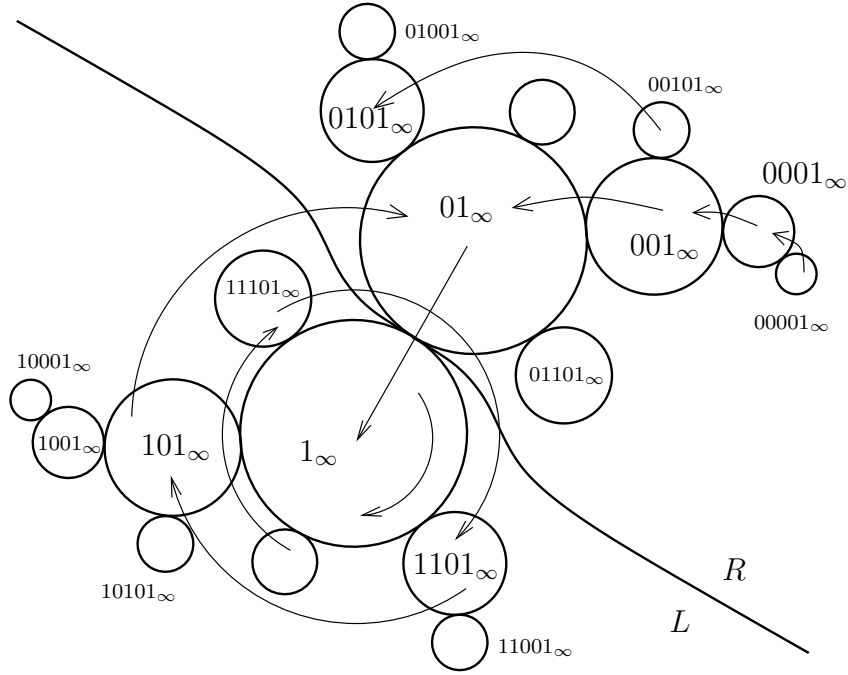}}}
\put(250,15){$L$}
\put(270,35){$R$}
\put(110,80){$1_\infty$}
\put(163,172){$01_\infty$}
\put(235,158){$001_\infty$}
\put(285,185){$0001_\infty$}
\put(50,82){$101_\infty$}
\put(153,37){$1101_\infty$}
\put(82,143){$\scriptstyle 11101_\infty$}
\put(197,110){$\scriptstyle 01101_\infty$}
\put(121,206){$0101_\infty$}
\put(242,220){$\scriptstyle 00101_\infty$}
\put(11,84){$\scriptstyle 1001_\infty$}
\put(185,7){$\scriptstyle 11001_\infty$}
\put(150,240){$\scriptstyle 01001_\infty$}
\put(35,28){$\scriptstyle 10101_\infty$}
\put(282,130){$\scriptstyle 00001_\infty$}
\put(3,115){$\scriptstyle 10001_\infty$}
\end{picture}
\caption{Example of itineraries of $\Delta$ and of pullbacks of $\Delta'$ and of their dynamics for the Golden mean basic Siegel Julia set. Some of the missing arrows can be determined by using the central symmetry: two symmetric components are mapped to the same component.}
\end{figure}

For $c$ and its preimages the above ambiguity can be dealt with by considering
$c$ as a point of $\Da$ or as a point of $\Da'$ and assigning different
itineraries respectively. That is, if we take a preimage of $c$ then until it
maps onto $c$ its itinerary is well-defined but at the time when it maps onto
$c$ we either consider it as a point of $\Da$ and assign $1$ to it, or consider
it as a point of $\Da'$ and assign $0$ to it. For the forthcoming images of $c$
we again have no ambiguity ($1$ is assigned to all of them). In other words, to
each preimage of $c$ exactly two itineraries are assigned as explained above.
This implies that two $\Da'$-pullbacks with itineraries $\bar i=i_0i_1\dots$
and $\bar j=j_0j_1\dots$ have a point in common if and only if there exists $k$
such that $i_0=j_0, \dots, i_{k-1}=j_{k-1}, i_k=1, j_k=0$ while $i_r=j_r=1$ for
any $r>k$ (in particular, no more than two $\Da'$-pullbacks can intersect at
one point). In what follows we denote the $\Da'$-pullback with itinerary $\bar
i$ as $\Da_{\bar i}$. Observe that since $\Da$ and $\Da'$ have only one point -
namely $c$ - in common, then any two $\Da'$-pullbacks (or $\Da$) may have at
most one point in common and no three $\Da'$-pullbacks (or $\Da$) intersect
(any common point of two $\Da'$-pullbacks maps onto $c$ while the two pullbacks
in question map onto $\Da$ and $\Da'$).

So far we have considered as examples points of $\Da'$-pullbacks (or
$\Da'$-pullbacks themselves). To deal with other points of $J$ we need
well-known facts about Julia sets of polynomials $P\in \mathcal{S}_{lc}$ (see,
e.g., \cite{grismayeover99}); the notation introduced here will be used from
now on. Let $p$ be the fixed point of $P$ belonging to $\Da$, $\si$ be the
angle doubling map on $S^1$, $(\ga, \ga')$ (resp. $[\ga, \ga']$) be the open
(resp. closed) counterclockwise circle arc from $\ga$ to $\ga'$, and let
$\ol{\ga\ga'}$ be the chord connecting $\ga$ and $\ga'$ in the disk. If
$P'(p)=e^{2\pi i\rho}$ ($\rho$ is irrational) then there exists a special \emph{rotational}
$\si$-invariant Cantor set $F\subset S^1$ such that $\si|_F$ is no more than
$2$-to-$1$ semiconjugate
to the irrational rotation by the angle $2\pi \rho$ \cite{bullsent94}. More precisely,
call an arc complementary to $F$ an \emph{$F$-hole}. Then the semiconjugacy
maps the endpoints of every $F$-hole into one
point and otherwise is one-to-one. The most important $F$-hole is the longest
one which is the half-circle $(\be, \al)$ with the endpoints denoted below by
$0<\al<1/2$ and $\be=\al+1/2$; the chord $\ol{\al \be}$ connecting $\al$ and
$\be$ is called the \emph{critical leaf (diameter)}. Other $F$-holes are
preimages of $(\be, \al)$.

The limit set $F=\om(\al)$ is exactly the set of angles whose entire orbits are
contained in $[\al, \be]$; also, the angles in $F$ are exactly the angles whose
rays land at points of $S=\partial \Da$. The endpoints of an $F$-hole can be
denoted by $\al_{-n}, \be_{-n}$ (since they are appropriate preimages of $\al$
and $\be$). Both rays $R_{\al_{-n}}$ and $R_{\be_{-n}}$ land at the point
$c_{-n}$, the $n$-th pullback of $c$ belonging to $\Da$. We need the following
simple fact.

\begin{lemma}\label{separ} Suppose that two
angles $\al''\ne \be''$ are given none of which maps into $F$ by any power of
$\si$. Then there exists $m$ such that $\si^m(\al'')$ and $\si^m(\be'')$ are
separated by the critical leaf $\ol{\al \be}$.
\end{lemma}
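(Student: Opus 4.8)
The plan is to argue by contradiction, exploiting the rigid combinatorial structure of the set $F$ of angles whose rays land on $\partial\Da$. Suppose that $\si^n(\al'')$ and $\si^n(\be'')$ lie on the same open side of the critical leaf $\ol{\al\be}$ for every $n\ge 0$ (none of these points can equal $\al$ or $\be$, since $\al,\be\in F$ while no iterate of $\al''$ or $\be''$ lands in $F$). I would first reformulate the hypothesis: because $F=\om(\al)$ is exactly the set of angles whose entire orbit stays in $[\al,\be]$, and $S^1=[\al,\be]\cup(\be,\al)$ is a disjoint decomposition, the condition ``$\si^n(\al'')\nin F$ for all $n$'' says precisely that the orbit of $\al''$ visits the open half-circle $(\be,\al)$ infinitely often, and likewise for $\be''$. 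Since the two orbits stay on the same side, there are then infinitely many times $t$ at which $\si^t(\al'')$ and $\si^t(\be'')$ both lie in $H^{(0)}:=(\be,\al)$, the largest $F$-hole. Fix one such $t$.

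Next I would use the structure of the $F$-holes recalled above: every $F$-hole is a preimage of $H^{(0)}$, so for each $k\ge 0$ there is a unique $F$-hole $H^{(k)}$ returning to $H^{(0)}$ exactly after $k$ iterates, $\si$ carries $H^{(k)}$ homeomorphically onto $H^{(k-1)}$ for $k\ge 1$ and $H^{(0)}$ homeomorphically onto $S^1\setminus\{2\al\}$, and $H^{(k)}\subset(\al,\be)$ for $k\ge1$ whereas $H^{(0)}$ sits on the opposite side of $\ol{\al\be}$. The heart of the argument is the claim that $\si^n(\al'')$ and $\si^n(\be'')$ lie in the \emph{same} $F$-hole for every $n\ge t$, proved by induction on $n$. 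If at time $n$ both lie in some $H^{(k)}$ with $k\ge 1$, then at time $n+1$ both lie in $\si(H^{(k)})=H^{(k-1)}$. If at time $n$ both lie in $H^{(0)}$, then at time $n+1$ the first lands in some $H^{(k)}$ and the second in some $H^{(k')}$ (recall no iterate lies in $F$); the same-side assumption forces $k,k'$ to be either both $0$ (done) or both $\ge1$, and in the latter case, following the two orbits forward through the chains $H^{(k)}\to H^{(k-1)}\to\cdots$ and $H^{(k')}\to H^{(k'-1)}\to\cdots$, the one with smaller index re-enters $H^{(0)}$ first, at which moment the other is still at some depth $m\ge1$, hence in $(\al,\be)$ — contradicting ``same side'' unless $k=k'$. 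So $k=k'$, and by uniqueness of the hole at each level the two orbits pass through literally the same holes and return to $H^{(0)}$ together. This proves the claim.

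In particular the two orbits hit $H^{(0)}$ at exactly the same times $t=t_0<t_1<t_2<\cdots$ and have the same sequence of excursion depths $d_i:=t_{i+1}-t_i-1$. Setting $\Psi_d:=(\si|_{H^{(0)}})^{-1}\circ(\si^{d}|_{H^{(d)}})^{-1}\colon H^{(0)}\to H^{(0)}$ — a homeomorphism onto a subarc of $(\be,\al)$ contracting lengths by the factor $2^{-d-1}\le 1/2$ — one has $\si^{t_i}(\al'')=\Psi_{d_i}\bigl(\si^{t_{i+1}}(\al'')\bigr)$, so $\si^{t}(\al'')$ is the value of the infinite composition $\Psi_{d_0}\circ\Psi_{d_1}\circ\Psi_{d_2}\circ\cdots$; being an infinite composition of contractions with ratios bounded away from $1$, this is a single point depending only on $(d_0,d_1,d_2,\dots)$. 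The identical formula holds for $\be''$, hence $\si^t(\al'')=\si^t(\be'')$. If $t\ge 1$ this is impossible: $\si^{t-1}(\al'')\ne\si^{t-1}(\be'')$ would then be the two preimages of one point, i.e.\ an antipodal pair, and every antipodal pair other than $\{\al,\be\}$ is separated by the diameter $\ol{\al\be}$, so the two points would lie on opposite sides at time $t-1$; and if $t=0$ we get $\al''=\be''$ directly. Either way we reach a contradiction, proving the lemma.

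The step I expect to be the main obstacle is the claim of the second paragraph — that ``same side forever'' together with ``the orbit meets $(\be,\al)$ infinitely often'' forces $\si^n(\al'')$ and $\si^n(\be'')$ into a common $F$-hole at every $n\ge t$ — since this is exactly where the fine structure of $F$ (the unique-hole-per-level chain $\si(H^{(k)})=H^{(k-1)}$, and that only $H^{(0)}$ lies on the $R$-side of $\ol{\al\be}$) must be used. It is also the only place where the hypothesis that no iterate lands in $F$ is genuinely needed, namely to guarantee that $\si^n(\al'')$ always lies in some $F$-hole so that the excursion bookkeeping never collapses. I note that this hypothesis cannot be dropped: the itinerary of a point with respect to the two half-circles cut out by $\ol{\al\be}$ does \emph{not} in general determine the point (two distinct periodic angles whose orbit stays inside $[\al,\be]$ can share the same itinerary), so it is the $F$-hole structure, and not mere expansivity of $\si$, that makes the argument work.
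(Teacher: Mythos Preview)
Your argument is correct and reaches the conclusion by a genuinely different route from the paper's. The paper proceeds metrically: writing $d(\ta,\ta')$ for arc-length distance, one has $d(\si(\ta),\si(\ta'))=T(d(\ta,\ta'))$ with $T$ the full tent map on $[0,1/2]$; since $T(x)>x$ on $(0,1/3)$, some iterate brings the pair to distance $\ge 1/3$, and since the complementary arcs of $\si^{-1}(F)=F\cup(F+\tfrac12)$ all have length $\le 1/4$, the two points are then forced into distinct such arcs on one side of $\ol{\al\be}$, after which a short further iteration sends one into $(\be,\al)$ while the other remains in $(\al,\be)$. Your approach bypasses the metric growth step entirely: you use the chain structure $\si(H^{(k)})=H^{(k-1)}$ of the $F$-holes to show inductively that ``same side forever'' forces the two orbits through the \emph{same} hole at every time $n\ge t$, and then a contraction/coding argument to deduce $\si^t(\al'')=\si^t(\be'')$. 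The paper's proof is shorter and more elementary; yours is more structural, makes the role of the $F$-hole hierarchy explicit, and, as you observe at the end, clarifies exactly why the hypothesis that no iterate lies in $F$ cannot be dropped.

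One small wrinkle in your last paragraph: from $\si^t(\al'')=\si^t(\be'')$ you jump to $\si^{t-1}(\al'')\ne\si^{t-1}(\be'')$, but this is not automatic. The clean fix is to replace $t$ by the least $s\ge 0$ with $\si^s(\al'')=\si^s(\be'')$; then $s\ge 1$ because $\al''\ne\be''$, and at time $s-1$ the two points form a genuine antipodal pair distinct from $\{\al,\be\}$, hence separated by $\ol{\al\be}$, giving the desired contradiction.
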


\begin{proof} Define $d(\ta, \ta')$ as the length of the shortest
arc between $\ta$ and $\ta'$ (we normalize the circle so that its length is
equal to $1$). It is easy to see that $d(\si(\ta), \si(\ta'))=T(d(\ta, \ta'))$
where $T:[0, 1/2]\to [0, 1/2]$ is the appropriate scaling of the full tent map.
Since $T(x)>x$ for $0<x<1/3$, there exists $m$ such that $d(\si^m(\al''),
\si^m(\be''))\ge 1/3$. If $d(\si^m(\al''), \si^m(\be''))=1/2$ then since
$\al'', \be''$ are not preimages of $\si(\al)$ we see that $\si^m(\al''),
\si^m(\be'')$ are separated by $\ol{\al \be}$ and we are done. Assume that
$\si^m(\al''), \si^m(\be'')$ are not separated by $\ol{\al \be}$ and
$d(\si^m(\al''), \si^m(\be''))<1/2$. Since the longest complementary arcs to
the set $\si^{-1}(F)$ are of length $1/4$, we see that $\si^m(\al''),
\si^m(\be'')$ belong to two distinct complementary arcs of $F\cup F+1/2$
located on one side of $\ol{\al \be}$. Thus after several steps the $\si$-image
of, say, $\al''$ will belong to $(\be, \al)$ while the corresponding image of
$\be''$ will still be inside $(\al, \be)$. This means that these two images of
$\al'', \be''$ will be separated by $\ol{\al \be}$.
\end{proof}

Call a point $y\in J$ a \emph{local cutpoint} of $J$ if the point $y$ is a
cutpoint of some connected neighborhood $U$ of $y$ in $J$. The union of
boundaries of $\Da$ and all $\Da'$-pullbacks forms the set of all local
cutpoints of $J$. The remaining points of $J$ are called the \emph{endpoints}
of $J$, or, more informally, the ``dust''. We say that a connected set $X$ \emph{connects}
a connected set $A$ and a connected set $B$ if the union $A\cup B\cup X$ is connected (in
practice we use this term when $A$ and $B$ are disjoint). Also, by a \emph{string} of
$\Da'$-pullbacks we mean a countable collection of $\Da'$-pullbacks concatenated
to each other (so that consecutive pullbacks in the string intersect over exactly one point).
Lemma~\ref{sy} studies how dust points in $J$ are connected to $\Da$.

\begin{lemma}\label{sy} Let $y\in J$ be a dust point with itinerary $\bar i=(i_0i_1\dots)$.
Then there exists a unique string $S_y$ of $\Da'$-pullbacks which connects $\Da$ and
$y$. Denote the $\Da'$-pullbacks in $S_y$ as follows: $\Da^1(y)$ is the closest to $\Da$
(in the sense of the spatial order on the string), $\Da^2(y)$ is the second one, etc.
Then the itinerary of $\Da^j(y)$ is obtained
from the itinerary $\bar i=i_0i_1\dots$ of $y$ as follows: choose the $j$-th
appearance of $0$ in $\bar i$, keep all the entries before that, and replace
all other entries in $\bar i$ by $1$.
\end{lemma}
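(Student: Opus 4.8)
The plan is to read the string $S_y$ directly off the itinerary $\bar i$, to verify by the intersection criterion of \S\ref{Siegel} that it is genuinely a string attached to $\Da$, to use local connectivity of $J$ to see that it accumulates exactly on $y$, and finally to prove uniqueness by forcing, one pullback at a time, the itinerary of any competing string. First observe that $\bar i$ must contain infinitely many $0$'s: otherwise $i\bigl(P^N(y)\bigr)=1_\iy$ for some $N\ge0$, and a point of $J$ with itinerary $1_\iy$ lies on $S=\partial\Da$ --- being the landing point of some ray ($J$ is locally connected), whose angle then has $\si$-orbit in $[\al,\be]$, hence belongs to $F$, and $F$-angles land on $S$. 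Thus $P^N(y)\in S$, so $y\in P^{-N}(S)$; but the set of local cutpoints of $J$ (namely $\partial\Da$ and the boundaries of the $\Da'$-pullbacks) is fully invariant under $P$ and contains $S$, so $y$ would be a local cutpoint, contradicting the hypothesis. Hence for each $j\ge1$ the $j$-th $0$ of $\bar i$ exists; let $k_j$ be its position (so $i_{k_j}=0$ and $i_0\dots i_{k_j-1}$ contains exactly $j-1$ zeros), and define $\Da^j(y)$ to be the $\Da'$-pullback with itinerary $i_0 i_1\dots i_{k_j-1}\,0\,1_\iy$. This word has the form $w\,0\,1_\iy$ for a finite word $w$, so it really is the itinerary of a $\Da'$-pullback, and $\Da^j(y)$ carries exactly the itinerary prescribed in the statement.

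Next I check that $\Da$ together with $S_y=\{\Da^1(y),\Da^2(y),\dots\}$ forms a string connecting $\Da$ and $y$. Since $\bar i$ has no $0$ strictly between its $j$-th and $(j{+}1)$-st zeros, the itinerary of $\Da^{j+1}(y)$ is obtained from that of $\Da^{j}(y)$ by replacing the terminal block $1_\iy$ with $1_{k_{j+1}-k_j-1}\,0\,1_\iy$; the intersection criterion (with index $k_{j+1}$) then gives $\Da^j(y)\cap\Da^{j+1}(y)\ne\0$, and by the ``at most one common point, no triple intersections'' statement of \S\ref{Siegel} this intersection is a single point. The same criterion applied to $\Da$ (itinerary $1_\iy$), with index $k_1$, gives $\Da\cap\Da^1(y)\ne\0$, and for $|j-l|\ge2$ it yields $\Da^j(y)\cap\Da^l(y)=\0$, since beyond the first place where the two itineraries disagree the longer one still carries a further $0$. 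Hence $\Da\cup\bigcup_j\Da^j(y)$ is a connected chain. It reaches $y$: as $i(\Da^j(y))$ agrees with $\bar i$ on the first $k_j$ symbols, $\Da^j(y)$ lies in the same piece $X_{k_j}$ of the $k_j$-th preimage partition of $J$ by $\{L,R\}$ as $y$; and since $J$ is locally connected its fibers are trivial, so the pieces $X_n\ni y$ satisfy $\bigcap_n X_n=\{y\}$ and hence $\diam(X_n)\to0$. As $k_j\to\iy$ this forces $\dist\bigl(y,\Da^j(y)\bigr)\to0$, so $y\in\ol{\bigcup_j\Da^j(y)}$ and $\Da\cup\{y\}\cup\bigcup_j\Da^j(y)$ is connected.

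For uniqueness, let $S'=(Q_1,Q_2,\dots)$ be any string of $\Da'$-pullbacks connecting $\Da$ and $y$. A dust point lies in no $\Da'$-pullback and on no boundary of one, so $y\notin\ol\Da$; hence $\Da$ meets $\bigcup_l Q_l$, and because consecutive pullbacks of a string meet in a point while non-consecutive ones are disjoint, $\Da$ meets precisely $Q_1$, the string is infinite, and each $Q_l$ separates $\Da$ from $y$. Now $Q_1\cap\Da\ne\0$ and the intersection criterion force $i(Q_1)=1_m\,0\,1_\iy$ for some $m\ge0$; moreover a point separated from $\Da$ by this $Q_1$ has itinerary beginning $1_m\,0$ (its first $m$ iterates remain on the $L$-side of the Jordan curve $\partial Q_1$, the $(m{+}1)$-st landing beyond it on the $R$-side), so $m$ must equal the position $k_1$ of the first $0$ of $\bar i$ and $Q_1=\Da^1(y)$. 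Repeating the analysis with $\Da$ replaced by $Q_1$ and $S'$ by its tail --- or, equivalently, applying $P^{k_1+1}$, under which $Q_1$ collapses into $\Da$ while $P^{k_1+1}(y)$ is again a dust point whose itinerary is $\bar i$ shifted past its first $0$ --- one finds in the same way that $Q_2=\Da^2(y)$, and inductively $Q_j=\Da^j(y)$ for all $j$. Therefore $S'=S_y$.

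The step I expect to be the main obstacle is the accumulation claim: that the combinatorially defined chain converges to the single point $y$ and not to a larger subcontinuum. This is exactly where local connectivity of $J$ is needed, via the triviality of its fibers (equivalently, the shrinking to points of the preimage partition pieces). A further point requiring care is the planar bookkeeping in the uniqueness argument --- precisely how the Jordan curves bounding the $\Da'$-pullbacks separate $\Da$ from $y$ and thereby pin down the successive itineraries.
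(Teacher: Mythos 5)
Your proof is correct, but it takes a genuinely different route from the paper's. The paper exploits local connectivity through \emph{arcwise connectivity}: it picks an arc $I'$ in $J$ from $y$ to $c$, shows by a ``shielding from infinity'' argument that $I'\cap S$ is a (possibly degenerate) subarc with endpoint $\pi(y)$, takes the subarc $I$ from $\pi(y)$ to $y$, and defines $S_y$ as the collection of $\Da'$-pullbacks met by $I\setminus\{y,\pi(y)\}$; uniqueness again comes from the shielding argument, and the itinerary formula is then read off by tracking how the string rotates around $\Da$ under iteration. You instead go in the opposite direction: you \emph{define} each $\Da^j(y)$ by the itinerary formula, verify by the intersection criterion that consecutive terms meet in a point (so the collection is a string) and that $\Da^1(y)$ is attached to $\Da$, and then use local connectivity only through the triviality of fibers --- the nested puzzle pieces $X_n\ni y$ shrink to $\{y\}$, forcing the string to accumulate on $y$. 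Your preliminary observation that the itinerary of a dust point has infinitely many $0$'s is a clean and necessary step that the paper leaves implicit. Each approach has something to recommend it: the paper's is geometrically transparent and avoids the bookkeeping with puzzle pieces, while yours makes the itinerary formula the starting point rather than the conclusion, so there is nothing left to verify once the string is shown to converge.

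One place to be careful is the separation step in your uniqueness argument --- the parenthetical about ``the $L$-side of the Jordan curve $\partial Q_1$'' is not quite the right picture, since $\partial Q_1$ bounds a disk and has only an inside and an outside. What does the work is the pair of external rays landing at the preimage of $c$ where $Q_1$ is attached to $\Da$: these rays together with their landing point cut the plane into a piece containing $\Da$ and a piece containing $Q_1,Q_2,\dots$ and $y$, and under $P^m$ this configuration maps to the cut by the rays at $c$, putting $P^m(y)$ on the $R$-side. You flagged this yourself, and the fix is routine, but as written the justification points at the wrong curve. The paper finesses this entirely by appealing to the same shielding argument it uses for existence, and in fact its own justification of the key claim (``It has to coincide with the itinerary of $y$ until the first time $\Da^j(y)$ maps onto $\Da$'') is equally terse --- so the two proofs are comparable in rigor, and yours is a valid alternative.
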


\begin{proof} Since $J$ is locally connected, it is arcwise connected. Hence there exists
an \emph{arc} (homeomorphic image of the interval $[0, 1]$) $I'\subset J$ connecting $y$ and $c$.
Let us show that $I'\cap \Da=I'\cap S$ is connected (recall, that $S=\partial \Da$). Indeed,
suppose that there exists an arc in $J$ whose endpoints belong to $S$ while otherwise the arc
is disjoint from $S$. Then it follows that parts of $S$ are ``shielded'' from infinity
by this arc, i.e. are not accessible from infinity, a contradiction. Hence such arcs
in $J$ do not exist which shows that $I'\cap S$ is either an arc or the point $c$.
If $I'\cap S=\{c\}$ is a point set $\pi(y)=c$, otherwise let $\pi(y)$
be the other endpoint of $I'\cap S$. Observe that by the above argument such point is
unique so that $\pi$ is well-defined. Denote the arc connecting $y$ and $\pi(y)$ by $I$;
then $I\cap S=\pi(y)$.
Intuitively, one can think of the point $\pi(y)$ as a ``projection'' of $y$ into $S$.

Since all points of $I$ except for $y$ are not from the dust,
it follows that $I\setminus \{y, \pi(y)\}$ is contained in the union of some $\Da'$-pullbacks.
The union of these $\Da'$-pullbacks forms the desired string $S_y$. Observe, that
the string $S_y$ is unique by the same geometric argument as above - otherwise some
points of the Julia set are not
accessible from infinity because they will be ``shielded'' from infinity by other
connected subsets of $J$ (these subsets will be parts of the two strings which
hypothetically connect $\Da$ and $y$ and will form the boundary of a simply connected
domain in the plane containing points of $J$). A similar geometric argument is often used
in the paper, so we explain it here in detail while simply alluding to it in the future.

By the definition we do \textbf{not} include
$\Da$ in $S_y$ and begin $S_y$ from $\Da^1(y)$, the $\Da'$-pullback
closest to $\Da$ in $S_y$. Thus, $\Da^1(y)$ is
either $\Da'$ or a pullback of $\Da'$ attached to $\Da$. Therefore $\Da^1(y)$
has itinerary $1_{k_1}01_\iy$ with $k_1\ge 0$ digits 1 to begin with (if
$k_1=0$ then $\Da^1(y)=\Da'$). The next pullback $\Da^2(y)$ in the string $S_y$
is attached to $\Da^1(y)$, hence its itinerary coincides with that of
$\Da^1(y)$ until $\Da^1(y)$ maps onto $\Da$. At this moment $\Da^2(y)$ becomes
the closest to $\Da$ pullback of $\Da'$ in the appropriate image of $S_y$ and
the process repeats itself with the only difference that now the image of
$\Da^2(y)$ needs $k_2$ steps to move around $\Da$ until it finally gets mapped
onto $\Da'$ and then onto $\Da$. This argument yields that the
``spatially'' $j$-th pullback $\Da^j(y)$ of $\Da'$ in $S_y$ has itinerary
$1_{k_1} 0 1_{k_2} 0 \dots 1_{k_j} 0 1_{\iy}$. It has to coincide with the
itinerary of $y$ until the first time $\Da^j(y)$ maps onto $\Da$. Also, numbers
$k_i$ may be equal to zero. Hence the itinerary of $\Da^j(y)$ is obtained
from the itinerary $\bar i=i_0i_1\dots$ of $y$ as follows: choose the $j$-th
appearance of $0$ in $\bar i$, keep all the entries before that, and replace
all other entries in $\bar i$ by $1$.
\end{proof}

As an example consider the periodic point $x$ with itinerary $\bar
i=\{011\}_\infty$. Then the following are the pullbacks of $\Da'$ forming the
string $S_x$: $\Da^1(x)=\Da_{01_\iy}, \Da^2(x)=\Da_{01101_\iy},
\Da^3(x)=\Da_{01101101_\iy}$ etc. By Lemma~\ref{separ} any two distinct
periodic points $u\in J$ and $v\in J$ have distinct itineraries (it is enough
to consider  rays landing at $u$ and $v$). Therefore their strings $S_u$ and
$S_v$ may have a certain initial piece in common (perhaps empty), and then will
separate. For example, let $z$ be the periodic point of itinerary
$\{0110111\}_\iy$. Then the string $S_z$ consists of the following pullbacks of
$\Da'$: $\Da^1(z)=\Da_{01_\iy}, \Da^2(z)=\Da_{01101_\iy},
\Da^3(z)=\Da_{011011101_\iy}, \dots$. Comparing $S_x$ and $S_z$ we see that
$\Da^1(x)=\Da^1(z), \Da^2(x)=\Da^2(z)$, but $\Da^3(x)\ne \Da^3(z)$. It follows
that $\Da^3(x)$ and $\Da^3(z)$ are disjoint (both are attached to the same
$\Da'$-pullback but at distinct points, hence if they meet then some points of
the Julia set will not be accessible from infinity).
From this moment on the strings $S_x$ and $S_z$ go their own ways,
converging to $x$ and $z$ respectively.

Consider now the dynamics of the string $S_x$. Since $x$ is of period $3$ then
$S_x$ must cover itself under $P^3$ while $P^3(S_x)$ contains $\Da$. In fact,
already the first application of $P$ restricted onto $S_x$ maps $\Da^1(x)$ onto
$\Da$. Then $S_x$ rotates about $\Da$ for one step and on the next step $S_x$
maps over itself. Thus, $P^3(\Da^2(x))=\Da^1(x), P^3(\Da^3(x))=\Da^2(x)$ etc.
In other words, $P^3$ shifts the pullbacks in $S_x$\,\,$1$ pullback ``down'' (i.e.
closer to $\Da$). The number $1$ is then called the \emph{basic length} of
$S_x$; the string $S_x$ consists of countably many fragments whose length is 1
and who are shifted by $P^3$ one onto another closer to $\Da$ except for the
first fragment of $S_x$ which maps by $P^3$ onto $\Da$.

In the case of the string $S_z$ the picture is a bit more complicated, however
it has essentially the same properties (in our description of the dynamics of
$S_z$ we skip discussing simple rotations of $S_z$ around $\Da$). It is easy to
check that first $P^3$ shifts $\Da^2(z)$ onto $\Da^1(z)$ while
$\Da^3(z)=\Da_{011011101_\iy}$ maps onto $\Da_{011101_\iy}$. Then $P^4$ maps
$\Da_{011101_\iy}$ onto $\Da'=\Da^1(z)$ and the entire string $S_z$ finally
covers itself. In other words, since the periodic fragment of the itinerary of
$z$ contains two zeros, then $P^7$ shifts the pullbacks in $S^z$ down
(closer to $\Da$) by $2$ pullbacks. In this case the basic length of $S_z$ is $2$; the string
$S_z$ consists of countably many fragments whose length is 2 and who are
shifted by $P^7$ one onto another closer to $\Da$ except for the first fragment
of $S_z$ which maps by $P^7$ onto $\Da$. Clearly, $P^7$ maps the first fragment
of $S_z$ onto $\Da$ as a continuous map (there have to be critical points), but
otherwise the map $P^7$ shifts fragments in $S_z$ homeomorphically.

In general, given a periodic orbit $y$ of period $k$ whose itinerary has the
minimal periodic fragment with $l$ zeros we see that the map $P^k$ shifts the
$\Da'$-pullbacks in $S_y$ down (closer to $\Da$) by $l$ pullbacks. In this case
the first fragment of $S_y$ is $\Da^1(y)\cup \dots \cup \Da^l(y)$, the second
fragment of $S_y$ is the union $\Da^{l+1}(y)\cup \dots \cup \Da^{2l}(y)$, etc.
Observe that all the fragments in $S_y$ are in fact the pullbacks of the first
fragment by the appropriate branch of the inverse function to $P^l$, and from
the second fragment on all the fragments in $S_y$ are disjoint from $\Da$.
Moreover, since in this case the critical limit set $\omega_P(c)$ coincides
with $\Da$, we see that by well-known shrinking properties of pullbacks under
polynomial/rational maps (see, e.g., Shrinking Lemma \cite{lyumin}) the
diameters of the pullbacks in $S_y$ converge to $0$. If diameters of the sets
in a sequence converge to $0$ then the sets are said to form a \emph{null
sequence}; it follows that in this case the fragments in $S_y$ described above
form a \emph{null sequence}. Of course, in the case at hand \emph{any} string
$S_\zeta$ converges to the point $\zeta$ defining it - after all, the Julia set
$J$ is locally connected. However even in the case of basic Cremer Julia sets
(which are not locally connected) these ideas, with some modifications, still
apply.

\newcommand{\mycbrace}[2]{%
 \rput(#2,#2){\psline(#1,0)}%
 \rput(-#2,#2){\psline(-#1,0)}%
 \rput(#2,0){\psarc{-}{#2}{90}{180}}%
 \rput(-#2,0){\psarc{-}{#2}{0}{90}}%
 \rput(#1,#2){\rput(#2,#2){\psarc{-}{#2}{-90}{0}}}%
 \rput(-#1,#2){\rput(-#2,#2){\psarc{-}{#2}{-180}{-90}}}%
}
\begin{figure}
\begin{picture}(360,300)
\put(0,0){\scalebox{0.75}{\includegraphics{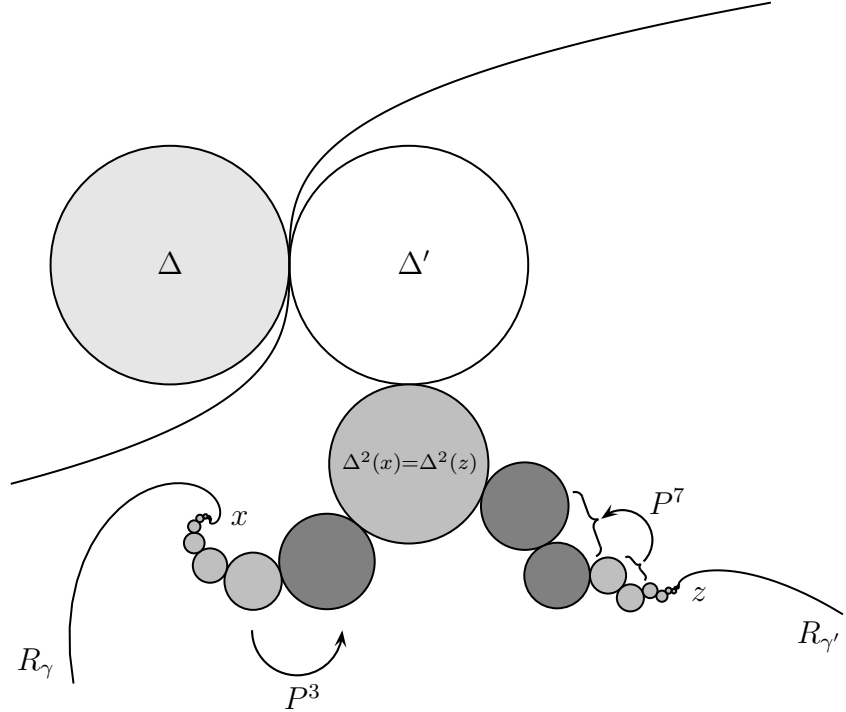}}}
\put(93,185){$\Delta$}
\put(184,185){$\Delta'$}
\put(163,111){$\scriptstyle\Da^2(x)=\Da^2(z)$}
\put(121,90){$x$}
\put(40,35){$R_\gamma$}
\put(295,61){$z$}
\put(335,45){$R_{\gamma'}$}
\put(146,50){\psarc{->}{0.6}{-180}{0}}
\put(141,20){$P^3$}
\put(269,84){%
 \put(0,0){\psarc{->}{0.4}{-45}{135}}%
 \put(10,10){$P^7$}%
 \put(-9,8){\rput{110}{\mycbrace{0.25}{0.1}}}
 \put(7,-9){\rput{130}{\mycbrace{0.10}{0.05}}}
}
\end{picture}
\caption{Illustration, for the golden mean basic Siegel Julia set, of the examples of periodic points $x$ and $z$ considered in the text, together with the associated strings of $\Delta$-pullbacks.}
\end{figure}

Let us go back to the example we have already partially considered before, i.e.
to the periodic points $x, z$ and their strings $S_x, S_z$. As we have seen,
$S_x$ and $S_z$ converge to the periodic points $x$ and $z$ respectively. On
the other hand, we have seen that the strings $S_x$ and $S_z$ separate after
the pullback $\Da^2(x)=\Da^2(z)=\Da_{01101_\infty}$. Let $R_\ga, R_{\ga'}$ be
the external rays landing at $x$ and $z$ respectively. Denote by $I$ the open
arc between $\ga$ and $\ga'$ contained in the $0$-semicircle. Denote by $\hat
S_x$ the closure of the ``tail'' of $S_x$ taken from $\Da_{01101\infty}$ to the
point $x$ and by $\hat S_z$ the closure of the ``tail'' of $S_z$ taken from
$\Da_{01101\infty}$ to the point $z$ (so that $\hat S_x\cap \hat
S_z=\Da_{01101\infty}$). Then the union of $\hat S_x, \hat S_z, R_\ga$ and
$R_\ga'$ is a ``fork'' which encloses a wedge containing all rays $R_\ta,
\ta\in I$. The impression of $\ta\in I$ must be contained in the closure of
this wedge and therefore is disjoint from $\Da$. Again, for the polynomials in
$\mathcal{S}_{lc}$ this fact can be shown in a much easier way. However the
argument is valuable because it applies in more general situations.

Let $\si_2$ be the one-sided shift on the space of all
sequences with symbols $1$ and $0$, and let $A_n=\si_2^{-n}(1_\iy)$. Notice
that $A_n$ includes all $\si_2$-preimages of $1_\iy$ of order less than or
equal to $n$. Consider elements of $A_n$ as vertices of a graph connected by
edges if and only if the $\Da'$-pullbacks with these itineraries intersect.
Clearly, this makes $A_n$ into a tree (if there is a loop in $A_n$ then as before there
must be points of $J$ shielded from the infinity, a conradiction).
From now on we always consider $A_n$
endowed with the tree structure. Suppose that for each $\ol{s}\in A_n$ a set
$M_{\ol{s}}$ is given and $g:\cup M_{\ol{t}}\to \cup M_{\ol{t}}$ is a function
such that 1) $g(M_{\ol{s}})\subset M_{\si_2(\ol{s})}$, and 2) $M_{\ol{s}}\cap
M_{\ol{t}}\ne\0$ if and only if $\Da_{\ol{s}}\cap \Da_{\ol{t}}\ne \0$. Then we
say that $(g, \{M_{\ol{s}}, \ol{s}\in A_n\})$ has the same \emph{(dynamical)
intersection pattern} as $\{\Da_{\ol{s}}, \ol{s}\in A_n\}$. It follows from the
results of \cite{bc06} (see below) that for certain basic Cremer polynomials
$f$ there is an invariant continuum $M$ which plays the role of the Siegel disk
$\Da$ above in the sense that the intersection pattern of $M$ and all
appropriately defined $M'$-pullbacks of order $n$ and the intersection pattern
of $\Da$ and all $\Da'$-pullbacks of order $n$ are the same.

\subsection{Basic Cremer polynomials}

Let us introduce terminology and notation. Let $P_\al(z)=e^{2\pi i\al}z+z^2$.
Clearly, the critical point of $P_\al$ is $c_\al=(-.5)e^{2\pi i\al}$; set
$M_\al=\om(c_\al)$ and call it the \emph{critical limit set} of $P_\al$. It is
well-known (see, e.g., \cite{mane93}) that if $P_\al$ is a basic Cremer
polynomial then its Cremer fixed point $p_\al$ belongs to $M_\al$. The next
lemma is obtained in \cite{chi05}.

\begin{lemma}[Childers]\label{conem} If $P_\al$ is a basic Cremer polynomial then
the set $M_\al$ is a continuum.
\end{lemma}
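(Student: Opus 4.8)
The plan is to deduce connectedness of $M_\al=\om(c_\al)$ from two ingredients: the general structure of $\omega$-limit sets of continuous maps, and the fact (recalled just above, going back to \cite{mane93}) that the Cremer fixed point $p_\al$ lies in $\om(c_\al)$. As soft preliminaries one notes that, since $P_\al$ has a Cremer fixed point, its Julia set is connected, so $c_\al$ lies in the compact filled-in Julia set and the orbit $\{P_\al^n(c_\al)\}$ is bounded; hence $M_\al=\om(c_\al)$ is a nonempty compact set, and it is fully invariant, $P_\al(M_\al)=M_\al$ (surjectivity being the usual compactness argument: a subsequential limit of preimages of a limit point is again a limit point).

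The key general input I would invoke is the structural dichotomy for an $\omega$-limit set $\Omega=\om(x)$ of a continuous self-map $f$ of a compact metric space: either $\Omega$ is connected, or there is an integer $m\ge 2$ and a decomposition $\Omega=\Omega_0\sqcup\cdots\sqcup\Omega_{m-1}$ into nonempty compact sets with pairwise positive distance such that $f(\Omega_i)=\Omega_{(i+1)\bmod m}$. The mechanism is that the orbit of $x$ is eventually trapped in a small neighbourhood of $\Omega$, which splits into the disjoint neighbourhoods of the (finitely many, for $\Omega$ compact) clopen pieces one starts from; continuity of $f$ together with internal chain transitivity of $\om(x)$ (any two of its points are joined, within $\Omega$, by arbitrarily fine $\varepsilon$-pseudo-orbits of $f$) then forces the itinerary of the orbit among these pieces to be eventually a single cyclic pattern, which is exactly the cyclic permutation above.

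Granting this, apply it with $f=P_\al$, $x=c_\al$, $\Omega=M_\al$. If $M_\al$ were disconnected we would get such a decomposition $M_\al=\Omega_0\sqcup\cdots\sqcup\Omega_{m-1}$ with $m\ge 2$ and $P_\al(\Omega_i)=\Omega_{(i+1)\bmod m}$. By \cite{mane93} we have $p_\al\in M_\al$, so $p_\al\in\Omega_j$ for some $j$; since $P_\al(p_\al)=p_\al$ this gives $p_\al\in\Omega_j\cap P_\al(\Omega_j)=\Omega_j\cap\Omega_{(j+1)\bmod m}$, and as distinct pieces are disjoint this forces $j\equiv j+1\pmod m$, i.e. $m=1$, a contradiction. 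Hence $M_\al$ is connected, and being nonempty and compact it is a continuum.

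The one genuinely nontrivial point — the step I expect to take real work, or a careful citation to the topological-dynamics literature — is the structural dichotomy, and within it the assertion that a disconnected $\omega$-limit set breaks into only \emph{finitely many} blocks. This is where compactness of $M_\al$ is essential, combined with internal chain transitivity: one has to rule out the orbit's itinerary among the clopen pieces being genuinely aperiodic, and then chain transitivity upgrades ``eventually periodic itinerary'' to ``a single cyclic permutation of the pieces''. The presence of an honest fixed point $p_\al$ inside $M_\al$ then collapses the putative cycle, and the rest is immediate.
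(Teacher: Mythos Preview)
The paper does not prove this lemma; it is attributed to Childers \cite{chi05} and quoted without proof, so there is no argument in the paper to compare against directly. That said, your proposal has a real gap, and it is exactly where you flagged uncertainty. The ``structural dichotomy'' you invoke for $\omega$-limit sets of continuous self-maps of compact metric spaces is false. Take the one-sided shift $f$ on $X=\{0,1\}^{\mathbb N}$ and any point $x$ with dense orbit: then $\omega(x)=X$ is compact, totally disconnected, internally chain transitive, and contains the fixed point $0^\infty$. Your own argument (a fixed point forces $m=1$) shows there is no clopen decomposition of $X$ into $m\ge 2$ pieces cyclically permuted by $f$; yet $X$ is disconnected. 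Concretely, relative to the clopen partition $[0]\sqcup[1]$ the itinerary of the orbit of a transitive $x$ is just the symbol sequence of $x$ itself, which is not eventually periodic --- so the mechanism you sketch (``continuity plus chain transitivity force an eventually cyclic itinerary'') breaks down.

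What \emph{is} true is the weaker statement: if $\omega(x)$ has only finitely many connected components, then $f$ permutes them in a single cycle (each component maps into a component, surjectivity makes this a permutation, and the orbit visiting every component forces a single cycle), after which your fixed-point argument finishes. But nothing in your setup rules out infinitely many components of $M_\al$, and the presence of $p_\al\in M_\al$ does not by itself exclude a Cantor-like structure, as the shift example shows. Childers' actual argument uses holomorphic tools specific to the Cremer setting --- Perez-Marco's hedgehog/mother-hedgehog theory --- rather than abstract topological dynamics; the connectedness of $M_\al$ is inherited from the connectedness of those invariant continua, not from a general decomposition principle for $\omega$-limit sets.
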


The construction in the next section aims at proving that certain Julia sets of
positive area are solar. We need the following definition: an angle $\ta$ is
said to be of \emph{bounded type} if there exists $K$ such that in the
continued fraction expansion $[a_1, a_2, \dots]$ of $\ta$ we have $a_i<K$ for
all $i$. Consider the polynomial $P_\ta(z)$ assuming that $\ta$ is of bounded
type. Then $P_\ta$ has an invariant closed Siegel disk $\Da_\ta$, $\partial
\Da_\ta=M_\ta$ is a simple closed curve and $J_{P_\ta}$ is locally connected
\cite{pet96}. For every $N$, define $\tS_N$ as the set of all $\ta$'s with
$a_i\ge N, i=1,  2, \dots$. Given two sets $A, B$ let us denote by $\partial[A,
B]$ the number $\sup \{\dist(a, B): a\in A\}$ (thus, if $\partial[A, B]=\e$
then the set $A$ is contained in the closed $\e$-neighborhood of $B$).
Theorem~\ref{bc} is proven in \cite{bc06}. The key tool in the proof is the use
of Inou-Shishikura's results on near parabolic renormalization \cite{is}.

\begin{theorem}[Buff-Ch\'eritat]\label{bc} There exists $N$ such that for every
$\ta\in \tS_N$ of bounded type and every $\e>0$ if $\al\in \tS_N$ is
sufficiently close to $\ta$ then $\partial [M_{\al}, \Da_\ta]<\e$ and
$M_{\al}\ne J_{P_{\al}}$. Moreover, there exist basic Cremer polynomials
$P_\al$ with $\al\in \tS_N$ arbitrarily close to $\ta$ and with positive area
Julia sets.
\end{theorem}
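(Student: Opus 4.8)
The plan is to treat the two assertions separately, obtaining the confinement (and the inequality $M_\al\ne J_{P_\al}$) from near-parabolic renormalization, and the positive-area statement from an inductive perturbation argument.

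\emph{Confinement and $M_\al\ne J_{P_\al}$.} I would take $N$ at least the universal threshold of Inou and Shishikura's near-parabolic renormalization \cite{is}, so that for every $\ta\in\tS_N$ a suitable restriction of $P_\ta$ lies in an Inou--Shishikura class invariant under near-parabolic renormalization, and so that $P_\al$ does too for every $\al\in\tS_N$ close enough to $\ta$. As recalled above, for $\ta$ of bounded type $M_\ta=\om(c_\ta)=\partial\Da_\ta$. The point is that, for $\al$ near $\ta$, the first finitely many renormalization levels of $P_\al$ are uniformly close to those of $P_\ta$, while the \emph{a priori} compactness of the Inou--Shishikura class confines the union of all the remaining levels --- and hence the part of $\om(c_\al)$ they account for --- to an arbitrarily small neighborhood of the corresponding piece of $\om(c_\ta)$. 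Telescoping these estimates yields, for each $\e>0$ and each $\al\in\tS_N$ sufficiently close to $\ta$, that $M_\al$ lies in the $\e$-neighborhood of $\om(c_\ta)\subset\Da_\ta$, i.e. $\partial[M_\al,\Da_\ta]<\e$. For $M_\al\ne J_{P_\al}$ it suffices to handle $\e$ below a fixed $\e_0$ (``sufficiently close'' may only be tightened). Pick $\e_0$ smaller than the distance from $\ol{\Da_\ta}$ to the repelling fixed point $\be_\ta=1-e^{2\pi i\ta}$ of $P_\ta$; this distance is positive, since $\be_\ta\nin\Da_\ta$ and $\be_\ta\nin\partial\Da_\ta$ (the boundary dynamics of $\Da_\ta$ is conjugate to an irrational rotation, hence fixed-point free). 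Then for $\al$ close to $\ta$ the repelling fixed point $\be_\al$ of $P_\al$ lies in $J_{P_\al}$ but outside the $\e_0$-neighborhood of $\Da_\ta$, so outside $M_\al$; hence $M_\al\ne J_{P_\al}$.

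\emph{Positive-area basic Cremer examples.} This is the substantial half and it follows \cite{bc06}. I would build inductively a sequence $(\al_n)_{n\ge1}$ of bounded-type numbers in $\tS_N$ with $\al_1=\ta$, arranging that the continued fraction of $\al_{n+1}$ agrees with that of $\al_n$ on an initial segment whose length $m_n$ tends to $\iy$, then carries a single very large partial quotient, then a constant tail of $N$'s. Taking $m_n\to\iy$ fast makes $\sum_n|\al_{n+1}-\al_n|$ as small as desired, so $\al=\lim_n\al_n$ is as close to $\ta$ as we wish; placing the inserted large partial quotients at positions tending to $\iy$ and choosing them large enough makes the limiting continued fraction violate the Brjuno condition, so by Yoccoz \cite{yoc95} $\al$ is a Cremer number and $P_\al$ a basic Cremer polynomial with $\al\in\tS_N$. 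The crucial requirement --- what separates a useful perturbation from an arbitrary one --- is that almost no area be lost at each step: for a prescribed summable sequence $\e_n>0$ with $\sum_n\e_n<\mathrm{area}(\Da_\ta)$, one must choose $\al_{n+1}$, compatibly with the previous requirements, so that
\[
\mathrm{area}\bigl(K(P_{\al_{n+1}})\bigr)\ge\mathrm{area}\bigl(K(P_{\al_n})\bigr)-\e_n ,
\]
$K(P_\al)$ denoting the filled Julia set. Establishing this is where McMullen's measurable-density results near the boundary of a bounded-type Siegel disk \cite{mcm98} enter, together with Ch\'eritat's parabolic-explosion techniques \cite{che01} (which present the passage from $\al_n$ to $\al_{n+1}$ as a near-parabolic perturbation of the rational convergent $p_{m_n}/q_{m_n}$) and the Inou--Shishikura \emph{a priori} bounds \cite{is} (which keep all renormalizations in a fixed compact class, so the density estimates persist under renormalization): although the Siegel disk of $P_{\al_{n+1}}$ may be far smaller than that of $P_{\al_n}$, McMullen's self-similar picture of $K(P_{\al_n})$ near $\partial\Da_{\al_n}$ shows that $\Da_{\al_n}$ is filled, up to an arbitrarily small amount of area, by the iterated preimages of the smaller Siegel disk.

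Granting the area estimate the rest is routine. Iterating it gives $\mathrm{area}(K(P_{\al_n}))\ge\mathrm{area}(\Da_\ta)-\sum_{j<n}\e_j\ge\delta_0>0$ for all $n$, with $\delta_0=\mathrm{area}(\Da_\ta)-\sum_j\e_j$. Since $\al\mapsto K(P_\al)$ is upper semicontinuous, $\limsup_n K(P_{\al_n})\subset K(P_\al)$; as these compacta lie in a fixed disk, the reverse Fatou lemma for their indicator functions gives $\limsup_n\mathrm{area}(K(P_{\al_n}))\le\mathrm{area}(K(P_\al))$, so $\mathrm{area}(K(P_\al))\ge\delta_0>0$. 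Finally, $\al$ being a Cremer number, $P_\al$ has no bounded Fatou component, so $K(P_\al)$ has empty interior and equals $J_{P_\al}$; hence $J_{P_\al}$ has positive area. The main obstacle is the no-loss-of-area step: showing that when the perturbation destroys (or drastically shrinks) the Siegel disk, a definite proportion of the area survives in the filled Julia set of the perturbed polynomial --- exactly the point where McMullen's density estimates, the Inou--Shishikura bounds, and Ch\'eritat's explosion techniques must be combined.
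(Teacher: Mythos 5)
The paper does not prove this theorem: it is stated as ``Theorem~[Buff--Ch\'eritat]'' and the text immediately preceding it says ``Theorem~\ref{bc} is proven in \cite{bc06}.'' So there is no in-paper proof to compare against. Judged on its own, your sketch is a faithful high-level roadmap of the Buff--Ch\'eritat argument as the paper's introduction describes it: the Inou--Shishikura invariant class and its a priori compactness drive the confinement estimate $\partial[M_\al,\Da_\ta]<\e$; the positive-area construction is the inductive perturbation of bounded-type angles within $\tS_N$, with continued-fraction control guaranteeing $\al\in\tS_N$, non-Brjuno (hence Cremer) in the limit, and closeness to $\ta$; McMullen's measurable-density results, Ch\'eritat's parabolic explosion, and the Inou--Shishikura bounds combine to give the no-area-loss estimate at each stage; upper semicontinuity of $\al\mapsto K(P_\al)$ passes positivity of area to the limit; and emptiness of the Fatou set for a quadratic Cremer polynomial identifies $K(P_\al)$ with $J_{P_\al}$. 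Your argument for $M_\al\ne J_{P_\al}$ via the repelling fixed point $1-e^{2\pi i\ta}$ (which lies outside $\ol{\Da_\ta}$ since the boundary dynamics is an irrational rotation) is a clean and correct way to extract that inequality from the confinement estimate.

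That said, you should be clear that this is a roadmap rather than a proof. The step you flag as ``the crucial requirement'' --- that $\mathrm{area}(K(P_{\al_{n+1}}))\ge\mathrm{area}(K(P_{\al_n}))-\e_n$ can be arranged while simultaneously steering the continued fraction --- is not a lemma that one ``establishes''; it is essentially the entire content of \cite{bc06}, and in particular the uniform control of post-critical sets under the perturbation (the role of \cite{is}) is what lets the McMullen-type density persist through infinitely many renormalization levels. As written, your text correctly names the ingredients and where they enter, but it does not supply the argument that makes them fit together, so as a substitute for the citation it would not stand alone.
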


In what follows we will use the notation introduced above.

\section{Main theorem}

The aim of this section is to prove Theorem~\ref{main}. From now on we fix $N$
as in Theorem~\ref{bc}. For a polynomial $P_\al$, we denote by $\psi_\al$ the
involution which maps any $z$ to the other preimage of $P_\al(z)$. Set
$\psi_\al(M_\al)=M'_\al$. The following lemma uses self-explanatory notation.

\begin{lemma}\label{inter} Given $n$, $\ta\in \tS_N$ of bounded type, and $\e>0$ there exists a
neighborhood $U$ of $\ta$ such that for any $\al\in \tS_N\cap U$ we have
$P_\al^i(c_\al)\nin M'_\al, i\le n$, and there exists a collection of
well-defined $M'_\al$-pullbacks $M_{\al, \ol{j}}, \ol{j}\in A_n$ by $P_\al$
such that $\partial [M_{\al, \ol{j}}, \Da_{\ta, \ol{j}}]<\e$ for any $\ol{j}\in
A_n$ and the $M'_\al$-pullbacks $\{M_{\al, \ol{j}}, \ol{j}\in A_n\}$ have the
same intersection pattern as $\Da'_\ta$-pullbacks $\{\Da_{\ta, \ol{j}},
\ol{j}\in A_n\}$.
\end{lemma}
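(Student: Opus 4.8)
The plan is to deduce Lemma~\ref{inter} from Theorem~\ref{bc} by combining a Hausdorff-continuity argument with the stability of the intersection combinatorics under perturbation. First I would fix $\ta\in\tS_N$ of bounded type, so that by \cite{pet96} $P_\ta$ has an invariant Siegel disk $\Da_\ta$ with $\partial\Da_\ta=M_\ta$ a simple closed curve and $J_{P_\ta}$ locally connected; in particular the machinery of Subsection~\ref{Siegel} applies to $P_\ta$, giving the family $\{\Da_{\ta,\ol j},\ \ol j\in A_n\}$ of univalent $\Da'_\ta$-pullbacks with the intersection pattern encoded by the tree $A_n$. Since all forward images of $c_\ta$ lie on $M_\ta=\partial\Da_\ta$ and avoid the interior, in fact $P_\ta^i(c_\ta)\nin\Da'_\ta\setminus\{c_\ta\}$, and a little care (using that $c_\ta$ itself is the unique common point of $\Da_\ta$ and $\Da'_\ta$) shows the pullbacks $\Da_{\ta,\ol j}$ of order $\le n$ are well-defined and their closures are pairwise disjoint except for the prescribed single-point intersections.

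Next I would invoke Theorem~\ref{bc}: for $\al\in\tS_N$ sufficiently close to $\ta$ we get $\partial[M_\al,\Da_\ta]<\e'$ (for any prescribed $\e'$), and $M_\al\ne J_{P_\al}$. The point is to upgrade this Hausdorff closeness of $M_\al$ to $\Da_\ta$ into Hausdorff closeness of each pullback $M_{\al,\ol j}$ to $\Da_{\ta,\ol j}$. Here I would argue by induction on the order $|\ol j|$ along the tree $A_n$. The base case is $M_{\al,1_\iy}=M_\al$ close to $\Da_\ta=\Da_{\ta,1_\iy}$ and $M'_\al=\psi_\al(M_\al)$ close to $\Da'_\ta=\psi_\ta(\Da_\ta)$, using continuity of $\al\mapsto\psi_\al$. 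For the inductive step, since $\Da_\ta$ is the whole critical limit set and $P_\ta^i(c_\ta)$ stays a definite distance from the relevant pullback $\Da_{\ta,\ol s}$ for $|\ol s|\le n$, for $\al$ close to $\ta$ the orbit segment $P_\al^i(c_\al)$, $i\le n$, also avoids a neighborhood of the corresponding region; this gives (i) $P_\al^i(c_\al)\nin M'_\al$ for $i\le n$, so that the $M'_\al$-pullbacks of order $\le n$ are all univalent and well-defined, and (ii) the inverse branches of $P_\al^{k}$ defining $M_{\al,\ol j}$ from $M'_\al$ are close, uniformly on a neighborhood, to the inverse branches of $P_\ta^{k}$ defining $\Da_{\ta,\ol j}$ from $\Da'_\ta$ — by continuous dependence of univalent inverse branches on the map and on the domain (a normal-families / Koebe argument). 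Pushing $M'_\al$, which is $\e'$-close to $\Da'_\ta$, through these nearly equal univalent branches yields $\partial[M_{\al,\ol j},\Da_{\ta,\ol j}]<\e$ once $\e'$ is small enough, the finitely many branches in $A_n$ all being uniformly controlled.

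It then remains to check that the $M'_\al$-pullbacks have the \emph{same} intersection pattern as the $\Da'_\ta$-pullbacks, i.e. $M_{\al,\ol s}\cap M_{\al,\ol t}\ne\0$ iff $\Da_{\ta,\ol s}\cap\Da_{\ta,\ol t}\ne\0$. One inclusion is soft: if $\Da_{\ta,\ol s}$ and $\Da_{\ta,\ol t}$ meet (necessarily at a single point $q$ which is a precritical point, with one pullback "to the $1$-side" and the other "to the $0$-side"), then the corresponding inverse branches of $P_\al$ agree at the corresponding precritical point of $P_\al$, forcing $M_{\al,\ol s}$ and $M_{\al,\ol t}$ to share that point; this is a combinatorial identity of inverse branches, not a limiting statement. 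The converse — that pullbacks which are \emph{disjoint} for $\ta$ stay disjoint for $\al$ — is the main obstacle, since Hausdorff closeness alone does not preclude two nearby continua from touching. I would handle it by choosing $\e$ smaller than half the minimal distance, over the finite tree $A_n$, between any two of the closed pullbacks $\ob{\Da_{\ta,\ol s}}$, $\ob{\Da_{\ta,\ol t}}$ that are disjoint (this minimal distance is a positive number because there are finitely many pullbacks and each is a compact set); then $\partial[M_{\al,\ol j},\Da_{\ta,\ol j}]<\e$ for all $\ol j\in A_n$ forces the disjoint ones to remain disjoint. The shrinking of $\e$ only shrinks the neighborhood $U$ of $\ta$ supplied by Theorem~\ref{bc} and the inductive step, so the whole statement goes through; finally the assertion $M_\al\ne J_{P_\al}$ is exactly part of Theorem~\ref{bc}, and one notes $\al$ can be taken in $\tS_N\cap U$ throughout.
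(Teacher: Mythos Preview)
The bulk of your argument—the Hausdorff-continuity of the pullbacks via Theorem~\ref{bc} together with continuous dependence of univalent inverse branches, and the preservation of disjointness by taking $\e$ below half the minimum gap over the finitely many disjoint pairs in $A_n$—matches the paper's approach and is correct.

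The gap is precisely in the direction you label ``soft'': deducing $M_{\al,\ol s}\cap M_{\al,\ol t}\ne\0$ from $\Da_{\ta,\ol s}\cap\Da_{\ta,\ol t}\ne\0$. Your claim that the corresponding precritical point of $P_\al$ lies in both pullbacks is not a combinatorial identity. Write $\ol s=w11_\infty$, $\ol t=w01_\infty$ with $|w|=k$. The set $M_{\al,\ol t}$ is an order-$k$ univalent pullback of $M'_\al$, and since $c_\al\in M'_\al$ (recurrence of $c_\al$, which you do not invoke), the precritical point $q_\al$ with $P_\al^k(q_\al)=c_\al$ along the branch determined by $w$ indeed lies in $M_{\al,\ol t}$. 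But $M_{\al,\ol s}$ is either $M_\al$ itself (when $w=1_k$) or an $M'_\al$-pullback of order $m<k$; in neither case is there a univalent branch of $(P_\al^k)^{-1}$ defined on a neighborhood of $M_\al$ (the critical orbit sits inside $M_\al$), so one cannot simply say ``the same inverse branch applied to $c_\al$ lands in $M_{\al,\ol s}$''. Placing $q_\al$ in $M_{\al,\ol s}$ amounts to showing that the successive $P_\al$-preimages of $c_\al$ that stay in $M_\al$ are exactly those tracking the rotational preimages $c_{\ta,-j}\in\partial\Da_\ta$; this can be argued, but it uses the one-sided bound $\partial[M_\al,\Da_\ta]<\e'$ to exclude the ``wrong'' preimages step by step, so it is a limiting argument after all, and not the one you wrote.

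The paper sidesteps this entirely with a global connectedness argument. Since $c_\al$ is recurrent, the critical value lies in $M_\al$, hence $P_\al^{-n}(M_\al)=\bigcup_{\ol j\in A_n}M_{\al,\ol j}$ is connected. As $A_n$ is a tree and the ``disjoint $\Rightarrow$ disjoint'' direction is already in hand, the only possible intersections among the $M_{\al,\ol j}$ occur along tree edges; if some edge $(\ol s,\ol t)$ had $M_{\al,\ol s}\cap M_{\al,\ol t}=\0$, removing it would split $A_n$ into two subtrees whose $M$-unions are then disjoint, contradicting connectedness of $P_\al^{-n}(M_\al)$. This avoids locating any specific precritical point inside $M_\al$ and is the step you should substitute for your ``soft'' inclusion.
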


\begin{proof} The proof is based upon Theorem~\ref{bc} and continuity arguments.
By Theorem~\ref{bc}, $M'_\al$ is very close to $\Da'_\ta$ for $\al\in \tS_N$
sufficiently close to $\ta$. Since $P^i_\ta(c_\ta)\nin \Da'_\ta$, it follows
then that $P^i_\al(c_\al)\nin M'_\al, i\le n$ for $\al\in \tS_N$ sufficiently
close to $\ta$ (so that an $i$-th pullback of $M'_\al$ is mapped 1-to-1 if
$i\le n$). Moreover, by continuity and by Theorem~\ref{bc} given an itinerary
$\ol{j}\in A_n$ we can correctly define $M'_\al$-pullbacks $M_{\al, \ol{j}}$ by
$P_\al$ corresponding to $\Da'_\ta$-pullbacks $\Da_{\ta, \ol{j}}$ by $P_\ta$
and guarantee that 1) $\partial [M_{\al, \ol{j}}, \Da_{\ta, \ol{j}}]<\e$ for
any $\ol{j}\in A_n$, and hence 2) for two itineraries $\ol{s}\in A_n, \ol{t}\in A_n$
if $\Da_{\ta, \ol{s}}\cap \Da_{\ta, \ol{t}}=\0$ then $M_{\al, \ol{s}}\cap
M_{\al, \ol{t}}=\0$. Let us show that if $\Da_{\ta, \ol{s}}\cap \Da_{\ta,
\ol{t}}\ne \0$ then $M_{\al, \ol{s}}\cap M_{\al, \ol{t}}\ne \0$. Observe that
since $A_n$ is a tree then if we remove the edge in $A_n$ connecting $\ol{s}$
and $\ol{t}$ then $A_n$ falls into two trees, $A_{\ol{s}}$ and $A_{\ol{t}}$.
Now, it is easy to see that the union of all pullbacks $M_{\al, \ol{r}},
\ol{r}\in A_n$ is the set $P_\al^{-n}(M_\al)$ which is connected because the
entire orbit of $c_\al$ is contained in $M_\al$ ($c_\al$ is recurrent for
$P_\al$ and $M_\al=\om(c_\al)$). On the other hand suppose that $M_{\al,
\ol{s}}\cap M_{\al, \ol{t}}=\0$. Then the unions $M_{\al,
A_{\ol{s}}}=\cup_{\ol{i}\in A_{\ol{s}}}M_{\al, \ol{i}}$ and $M_{\al,
A_{\ol{t}}}=\cup_{\ol{i}\in A_{\ol{t}}}M_{\al, \ol{i}}$ are disjoint because
they could only intersect over $M_{\al, \ol{s}}\cap M_{\al, \ol{t}}$ which is
empty. This contradiction shows that $M_{\al, \ol{s}}\cap M_{\al, \ol{t}}\ne
\0$ as desired.
\end{proof}

We are now ready to prove Theorem~\ref{main}. However it will be convenient to
first prove a simple technical lemma.

\begin{lemma}\label{pullba} Let $P$ be a basic Cremer polynomial and
$K\subset J$ be a continuum such that for some $k>0$ we have that
$P^{-k}(K)\cap K\ne \0$ and $P^i(c)\nin P^k(K)$ for any $i\ge 0$. Then
there exists a unique sequence $K(0)=K, K(-1), \dots$ of $K$ pullbacks by $P^k$
such that $K(-i)\cap K(-i-1)\ne \0$ and $P^k(K(-i-1))=K(-i)$ for all $i\ge 0$.
Moreover, if $P^{2k}(K)$ is disjoint from $K$ then in the sequence of sets
$P^{2k}(K), P^k(K), K, \dots$  non-empty intersections are only possible
between two consecutive sets.
\end{lemma}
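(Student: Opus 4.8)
The plan is to establish existence and uniqueness of the pullback sequence first, and then derive the disjointness statement from the structure of the orbit of $c$ relative to $P^k(K)$. Since $P$ is quadratic with non-separating Julia set, and $P^i(c)\nin P^k(K)$ for all $i\ge 0$ means in particular that $P(c)=P^1(c)\nin P^k(K)$, every $P^k(K)$-pullback of order one splits into two univalent pullbacks (the picture from the Preliminaries applies, since none of the relevant forward images of $c$ meets the set being pulled back). More precisely, $K$ itself is a $P^k(K)$-pullback of order $k$, and among the finitely many $P^k(K)$-pullbacks of order $2k$, exactly those whose closure meets $K$ are candidates for $K(-1)$.

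First I would show existence. By hypothesis $P^{-k}(K)\cap K\ne\0$, so some component $K(-1)$ of $P^{-k}(K)$ meets $K$; since the orbit of $c$ avoids $P^k(K)$, this $K(-1)$ maps onto $K$ homeomorphically under $P^k$ (it is a univalent pullback), so $P^k(K(-1))=K$ and $K(-1)\cap K\ne\0$. Now apply the univalent branch of $P^{-k}$ that sends $K$ to $K(-1)$ to the set $K(-1)$ itself: because $P^k(K(-1))=K$ and $K(-1)$ meets $K$, pulling back we get $K(-2)$ with $P^k(K(-2))=K(-1)$ and $K(-2)\cap K(-1)\ne\0$ (the intersection point is the $P^{-k}$-image of the intersection point of $K(-1)$ and $K$). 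Iterating, this produces the whole sequence $K(0)=K, K(-1), K(-2),\dots$ with the required two properties; one must check at each stage that the relevant branch of $P^{-k}$ is well-defined and univalent on a neighborhood of $K$ in $J$, which follows from $P^i(c)\nin P^k(K)$ for all $i\ge0$ together with the Shrinking Lemma type control, exactly as in the univalent-pullback discussion of the Preliminaries.

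Next I would prove uniqueness. Suppose $\widetilde K(-1)$ is another $P^k(K)$-pullback (of order $2k$) with $\widetilde K(-1)\cap K\ne\0$ and $P^k(\widetilde K(-1))=K$. Both $K(-1)$ and $\widetilde K(-1)$ meet $K$, and both map homeomorphically onto $K$ under $P^k$. If $K(-1)\ne\widetilde K(-1)$ then $K\cup K(-1)\cup\widetilde K(-1)$ is a connected subset of $J$ that, together with the two external rays landing appropriately, would shield part of $J$ from infinity --- the same geometric obstruction invoked repeatedly in the paper (a hypothetical second connecting continuum bounds a simply connected domain containing points of $J$ inaccessible from infinity). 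Hence $K(-1)=\widetilde K(-1)$, and inductively the entire sequence is unique.

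For the final assertion, assume $P^{2k}(K)\cap K=\0$. The three sets in question are $P^{2k}(K)\supset P^k(K)$'s image, $P^k(K)$, and $K$; consecutive ones, namely $(P^{2k}(K),P^k(K))$ and $(P^k(K),K)$, may meet. What must be ruled out is $P^{2k}(K)\cap K\ne\0$, which is precisely the standing hypothesis. So there is almost nothing to prove here except to phrase it correctly: the only pair among $\{P^{2k}(K),P^k(K),K\}$ that is \emph{not} consecutive is $\{P^{2k}(K),K\}$, and that pair is disjoint by assumption; consecutive pairs are allowed to intersect and generically do. I expect the main obstacle to be the careful justification of \textbf{uniqueness} via the shielding argument --- one needs that the two hypothetical pullbacks, being distinct continua in $J$ both attached to $K$ and mapping onto $K$, genuinely enclose a region with interior points of $J$; this is the standard accessibility/shielding dichotomy the authors develop, and invoking it cleanly (rather than the existence of the sequence, which is routine iteration of univalent inverse branches) is where the real content lies.
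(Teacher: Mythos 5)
Your existence step is fine and matches the paper, but the other two parts of your argument have genuine problems. For uniqueness, you invoke the ``shielding'' dichotomy: you claim that two distinct pullbacks $K(-1)$, $\widetilde K(-1)$, both meeting $K$, would enclose a region of $J$ inaccessible from infinity. But they don't: $K(-1)$ and $\widetilde K(-1)$ are disjoint (distinct components of $P^{-k}(K)$), and each may be attached to $K$ at a single point, so $K\cup K(-1)\cup\widetilde K(-1)$ is just a tree-shaped continuum --- there is no loop and nothing is shielded. The shielding argument in the paper (Lemma~\ref{sy}) works only when \emph{two chains connect the same pair of sets}, which is not the configuration here. The paper's actual argument is different and is the real content of the lemma: on $Y=K\cup K(-1)\cup\widetilde K(-1)$ the map $P^k|_Y$ is onto the tree-like continuum $K\cup P^k(K)$ but not one-to-one, so by Heath's theorem \cite{h96} (a locally one-to-one map from a continuum onto a tree-like continuum is a homeomorphism) $P^k|_Y$ must fail to be locally injective, hence some $y\in Y$ satisfies $P^j(y)=c$ with $0\le j\le k-1$; tracking where $y$ sits then forces $P^{k-j}(c)$ or $P^{2k-j}(c)$ into $P^k(K)$, contradicting the hypothesis. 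Your approach and the paper's are not equivalent, and yours has a gap exactly at the step you flagged as the main obstacle.

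You also misread the final assertion. The sequence $P^{2k}(K), P^k(K), K, \dots$ is the \emph{infinite} sequence continuing $K(-1), K(-2), \dots$ from the first part of the lemma, not a three-element list. The claim is that throughout this whole bi-infinite-looking chain, any two non-consecutive terms are disjoint --- in particular $K(-j)\cap P^{2k}(K)=\0$ and $K(-i)\cap K(-i')=\0$ for $|i-i'|\ge 2$. This requires an argument (the paper first pushes the hypothesis forward by powers of $P^k$ to get disjointness at distance two, then for a hypothetical minimal bad index $j$ forms $E=P^{2k}(K)\cup\dots\cup K(-j+1)$ and observes that $E\cap K(-j)$ would be disconnected, contradicting that any two subcontinua of the non-separating, empty-interior continuum $J$ have connected intersection). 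Your write-up dismisses this as ``almost nothing to prove,'' which is not the case; you need to supply the chain argument.
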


\begin{proof} Since $J$ is a non-separating one-dimensional continuum, $J$
is a tree-like continuum (i.e., for each $\e>0$ there exists a map $\phi:J\to T$
where $T$ is a finite tree such that for each $t\in T$ the diameter of $\phi^{-1}(t)<\e$).
Since $P^i(c)\nin K$ then it is clear that for every $i$ the
family of $K$-pullbacks by $P^k$ consists of $2^{ki}$ pairwise disjoint
continua contained in $J$. Since $P^{-k}(K)\cap K\ne 0$, there exists a
$K$-pullback $K(-1)$ of order $1$ by $P^k$ non-disjoint from $K$. Such pullback
is unique. Indeed, otherwise there exists another $K$-pullback $K'$ of order
$1$ by $P^k$ which is not disjoint from $K$. Set $K(-1)\cup K\cup K'=Y$. Then
$P^k|_{Y}$ is not 1-to-1 which implies by \cite{h96} that $P^k|_Y$ has a
critical point $y\in Y$. This means that for some $j, 0\le j\le k-1$ we have
$P^j(y)=c$. If $y\in K(-1)\cup K'$ then this implies that $P^{2k-j}(c)\in P^k(K)$, a
contradiction to the assumptions. On the other hand, if $y\in K$
then $P^{k-j}(c)\in P^k(K)$, again a contradiction to the assumptions.
Hence such point $y\in Y$ does not exist, and $K(-1)$ is unique. Since
$K(-1)$ clearly satisfies the same assumptions as $K$ itself we see that the
desired sequence of pullbacks of $K$ exists and is unique.

Suppose that $P^{2k}(K)\cap K=\0$. Then $P^k(K)\cap K(-1)=\0$ and for any $i\ge
0$ we have $K(-i)\cap K(-i-2)=\0$ (otherwise we apply the appropriate power of
$P^k$ to get a contradiction). Let us show that then $K(-j)\cap P^{2k}(K)=\0$
for any $j\ge 0$. Indeed, otherwise choose the minimal such $j$ that $K(-j)\cap
P^{2k}(K)\ne \0$. Then $j\ge 1$ and by the choice of $j$ the only non-empty
intersections among sets $P^{2k}(K), P^k(K), K, \dots, K(-j)$ are intersections
among consecutive pullbacks and the intersection $K(-j)\cap P^{2k}(K)\ne \0$.
Set $P^{2k}(K) \cup P^k(K)\cup \dots \cup K(-j+1)=E$. Then

$$E\cap K(-j)=[P^{2k}(K)\cap K(-j)]\cup [K(-j+1)\cap K(-j)]$$

\noindent is disconnected as the union of two disjoint non-empty continua.
However all continua are contained in a non-separating continuum $J$ with empty
interior. Hence the intersection of any two sub-continua of $J$ must be
connected. It follows that $K(-j)\cap P^k(K)=\0$ for any $j\ge 1$ as desired.
\end{proof}

To prove Theorem~\ref{main} we need the following construction. Choose two
$P_\ta$-periodic points $u$ and $v$ of periods $k$ and $l$ respectively. Then
depending on their itineraries the strings $S_u, S_v$ of $\Da'_\ta$-pullbacks
will have a few common pullbacks and then, starting at the last common
pullback, will consist of two pairwise disjoint sequences of pullbacks.
Clearly, the points $u$ and $v$ can be chosen so that the strings $S_u$ and
$S_v$ have at least two common $\Da'_\ta$-pullbacks. For example, if $u=x$ has
itinerary $\{011\}_\iy$ and $v=z$ has itinerary $\{0110111\}_\iy$ then the
strings $S_x$ and $S_z$ have two pullbacks $\Da'_\ta=\Da_{\ta, 01_\iy}$ and
$\Da_{\ta, 01101_\iy}$ in common, yet from the third pullback on the strings
$S_x$ and $S_z$ are disjoint. In any case, and this is important for what
follows, the last common pullback of the strings $S_x$ and $S_z$ is $\Da_{\ta,
01101_\iy}$, and it is disjoint from $\Da_\ta$.

Let us assume that the basic length of $S_u$ is $w$, the basic length of $S_v$
is $q$, and the initial finite string $F$, common to both $S_u$ and $S_v$,
consists of $m\ge 2$ $\Da'_\ta$-pullbacks. For simplicity and without loss of
generality assume that $m<\min(w, q)$. Denote the last common
$\Da'_\ta$-pullback in $F$ by $L$. Consider $w$ $\Da'_\ta$-pullbacks in $S_u$
immediately following $F$ and denote their union by $\hat F_u$; also, denote
the union of $w$ $\Da'_\ta$-pullbacks in $S_u$ immediately following $\hat F_u$
by $F_u$. Similarly, consider $q$ $\Da'_\ta$-pullbacks in $S_v$ immediately
following $F$ and denote their union by $\hat F_v$; also, denote the union of
$q$ $\Da'_\ta$-pullbacks in $S_v$ immediately following $\hat F_v$ by $F_v$.
By Lemma~\ref{pullba} there exists a string of $F_u$-pullbacks by $P^k_\ta$ $F_u(0)=F_u,
F_u(-1), \dots$ such that $S_u=F\cup \hat F_u\cup (\cup^\iy_{i=0} F_u(-i))$
and a string of $F_v$-pullbacks by $P^l_\ta$ $F_v(0)=F_v,
F_v(-1), \dots$ such that $S_v=F\cup \hat F_v\cup (\cup^\iy_{i=0} F_v(-i))$.

Set $n=m+3k+3l$. We choose $\al\in \tS_N$ very close to $\ta$ so that a few
conditions are satisfied. By Theorem~\ref{bc} we may assume that the area of
$J(P_\al)$ is positive and that $P_\al$ is a basic Cremer polynomial. We use
Lemma~\ref{inter} to guarantee that $\partial [M_{\al}, \Da_\ta]$ is so small
that the first $n$ iterates of $c_\al$ avoid $M'_\al$ (so that
$M'_\al$-pullbacks by $P_\al$ of order at most $n$ map to $M'_\al$ univalently), all
$M'_\al$-pullbacks by $P_\al$ of order $n$ are very close to the corresponding
$\Da'_\ta$-pullbacks by $P_\ta$ and the intersection patterns of $\Da'_{\ta,
\ol{i}}, \ol{i}\in A_n$ and $M'_{\al, \ol{i}}, \ol{i}\in A_n$ are the same.
Hence we may talk about the strings $F^\al, \hat F^\al_u, \hat F^\al_v,
F^\al_u, F^\al_v$ of $M'_\al$-pullbacks by $P_\al$ which correspond to the
strings $F, \hat F_u, \hat F_v, F_u, F_v$, and all the sets $F^\al, \hat
F^\al_u, \hat F^\al_v, F^\al_u, F^\al_v$ are continua. Denote by $L^\al$ the
$M'_\al$-pullback corresponding to the $\Da'_\ta$-pullback $L$. Recall that
since $m\ge 2$ then $L\cap \Da_\ta=\0$ and hence (by the choice of $\al$) we
may assume that $M_\al\cap (L^\al\cup \hat F^\al_u\cup \hat F^\al_v)=\0$.

Consider the set $F^\al_u$. Then $P^k_\al(F^\al_u)=\hat F^\al_u$ is disjoint
from $M_\al=\om(c_\al)$ and hence $P_\al^i(c_\al)\nin P^k_\al(F^\al_u)$ for
any $i\ge 0$. On the other hand, by the choice of $n$ we have that
$P^{-k}(F^\al_u)\cap F^\al_u\ne \0$.
Hence by Lemma~\ref{pullba} there is a
sequence of $F^\al_u$-pullbacks $F^\al_u=F^\al_u(0), F^\al_u(-1), \dots$ by $P^k_\al$.
Observe that $F^\al_u$ is disjoint from $P^{2k}(F^\al_u)$ by the choice of
$\al$ (clearly, $P^{2k}(F^\al_u)=P^k_\al(\hat F^\al_u)$ is the string of
$M'_\al$-pullbacks by $P^k_\al$ connecting $L^\al$ and $M_\al$ united with
$M_\al$ itself). By Lemma~\ref{pullba} we conclude that the set

$$\hat F^\al_u\cup \bigcup^\iy_{i=0} F^\al_u(-i)=Q^\al_u$$
is a chain of ``concatenated'' continua such that intersections among them are
only possible between $\hat F^\al_u$ and $F^\al_u=F^\al_u(0)$ and between two
consecutive pullbacks of $F^\al_u$.

Analogous claims can be proven for $\hat F^\al_v$ and $F^\al_v$. For the
corresponding pullbacks of $F^\al_v$ we use similar notation and get the set

$$\hat F^\al_v\cup \bigcup^\iy_{i=0} F^\al_v(-i)=Q^\al_v,$$
a chain of ``concatenated'' continua such that intersections among them are
only possible between $\hat F^\al_v$ and $F^\al_v=F^\al_v(0)$ and between two
consecutive pullbacks of $F^\al_v$.

It is easy to see that the sets $Q^\al_u$ and $Q^\al_v$ are disjoint. Indeed,
suppose otherwise. Then we can choose minimal $r, s$ such that $F^\al_u(-r)\cap
F^\al_v(-s)\ne \0$. It follows that $r>1$ and $s>1$. Then the continua $X=L^\al
\cup (\cup^r_{i=0} F^\al_u(-i))$ and $Y=L^\al \cup (\cup^s_{i=0}F^\al_v(-i))$
have a disconnected intersection (it consists of $L^\al$ and a non-empty
compact set $F^\al_u(-r)\cap F^\al_v(-l)$ disjoint from $L^\al$) despite the
fact that they both are contained in the Julia set $J(P_\al)$ (recall that the
Julia sets of basic Cremer polynomials are non-separating continua with empty
interior).

Since $M_\al\cap (\hat F^\al_u\cup \hat F^\al_v)=\0$ by the choice of $\al$
then by the Shrinking Lemma~\cite{lyumin} we know that $\diam(F^\al_u(-i))\to
0$ as $i\to \iy$ (i.e., $F^\al_u(-i), i=0, 1, \dots$ is a null-sequence). Hence
by continuity any limit point $a$ of this sequence of sets is $P^k_\al$-fixed.
Since there are finitely many $P^k_\al$-fixed points while the set of all limit
points of the sequence of sets $F^\al_u(-i), i=0, 1, \dots$ is connected
(recall, that this sequence of sets is a chain of ``concatenated'' continua) we
conclude that the sequence $F^\al_u(-i), i=0, 1, \dots$ converges to a
$P^k_\al$-fixed point which we will denote by $u'$. Similarly, the sequence
$F^\al_v(-i), i=0, 1, \dots$ converges to a $P^l_\al$-fixed point which we will
denote by $v'$.

Let us study possible intersections between some of these sets. Set
$Z=P^k_\al(\hat F^\al_u)=P^l_\al(\hat F^\al_v)$ ($Z$ is the string
of pullbacks of $M'_\al$ connecting $L^\al$ and $M_\al$, united with $M_\al$).
Lemma~\ref{pullba} implies that

$$[Z\cup \hat F^\al_u]\cap Q^\al_u=\hat F^\al_u$$

and

$$[Z\cup \hat F^\al_v]\cap Q^\al_v=\hat F^\al_v.$$

Let us show that $u'\nin Z\cup Q^\al_u$. Indeed, since $u'$ is a $P^k$-fixed
point then $u'\nin Q^\al_u$ because of the way sets $F^\al_u(-i), i\ge 0$
intersect. Suppose that $u'\in Z$ and consider two continua, $X=\ol{Q^\al_u}$
and $Y=Z\cup \hat F^\al_u$. It follows that their intersection $X\cap
Y=\{u'\}\cup \hat F^\al_u$ is disconnected, a contradiction. Thus, $u'\nin
Z\cup Q^\al_u$. Similarly we can show that $v'\nin Z\cup Q^\al_v$. Analogous
arguments show that since the continua $\ol{Z\cup Q^\al_u}$ and $\ol{Z\cup
Q^\al_v}$ must have a connected intersection then $u'\nin Q^\al_v, v'\nin
Q^\al_v$ and $u'\ne v'$. Finally, since by the construction $M_\al\cap (L^\al
\cup\hat F^\al_u\cup \hat F^\al_v)=\0$ then $M_\al\cap \ol{L^\al\cup
Q^\al_u\cup Q^\al_v}=\0$ (notice that by the above $u', v'\nin M_\al$.

Let $R_\ga$ be the external ray for $P_\al$ landing at $u'$ and $R_{\be}$ be
the external ray for $P_\al$ landing at $v'$. The above implies that the union

$$\ol{L_\al \cup Q^\al_u \cup
Q^\al_v} \cup R_\ga \cup R_\be$$

\noindent cuts the plane into two open half-planes $W$ and $H$, one of which
(say, $W$) contains $M_\al$ (and therefore the Cremer point $p_\al$ of
$P_\al$). Choose any external angle $\tau$ whose external ray $R_\tau$ is
contained in $H$. Then it follows that the impression of $R_\tau$ is contained
in $\ol{L_\al \cup Q^\al_u \cup Q^\al_v}$ and hence does not contain $p_\al$.
By Theorem~\ref{mainold} this implies that $J(P_\al)$ is a solar Julia set which completes
the proof of Theorem~\ref{main}.

\bibliographystyle{amsalpha}
\bibliography{/lex/references/refshort}
\providecommand{\bysame}{\leavevmode\hbox to3em{\hrulefill}\thinspace}
\providecommand{\MR}{\relax\ifhmode\unskip\space\fi MR }
\providecommand{\MRhref}[2]{%
\href{http://www.ams.org/mathscinet-getitem?mr=#1}{#2}
} \providecommand{\href}[2]{#2}

\end{document}